\newcommand*{\Luref}[1]{\nameref{#1}}
\crefname{hypothesis}{Hypothesis}{Hypotheses}
\title{Chemical Reaction Network Decomposition Technique for Stability Analysis\thanks{Submitted to the editors. \date{\today}.
\funding{This work was funded by the National Nature Science Foundation of China under Grant
No. 12071428, 11671418, and the Zhejiang Provincial Natural Science Foundation of China under Grant No. LZ20A010002.
}}}
\author{Yafei Lu\thanks{School of Mathematical Sciences, Zhejiang University (\email{11535029@zju.edu.cn (Y. Lu), gaochou@zju.edu.cn (C. Gao, Correspondence)).}}
\and Chuanhou Gao\footnotemark[2]
\and Denis Dochain\thanks{ICTEAM, UCLouvain, B\^{a}timent Euler, avenue Georges Lema\^{i}tre 4-6, 1348 Louvain-la-Neuve, Belgium (\email{denis.dochain@uclouvain.be (D. Dochain).}}}
\def\dd{\text{d}}
\newtheorem{property}{Property}
\newtheorem{example}{Example}
\setlist[enumerate]{listparindent=\parindent}
\begin{document}

\maketitle

\begin{abstract}                          % Abstract of not more than
This paper develops the concept of decomposition for chemical reaction networks, based on which a network decomposition technique is proposed to capture the stability of large-scale networks characterized by a high number of species, high dimension, high deficiency, and/or non-weakly reversible structure. We present some sufficient conditions to capture the stability of a network (may possess any dimension, any deficiency, and/or any topological structure) when it can be decomposed into a complex balanced subnetwork and a few $1$-dimensional subnetworks (and/or a few two-species subnetworks), especially in the case when there are shared species in different subnetworks. The results cover encouraging applications on autocatalytic networks with some frequently-encountered biochemical reactions examples of interest, such as the autophosphorylation of PAK$1$ and Aurora B kinase, autocatalytic cycles originating from metabolism, etc.
\end{abstract}

% REQUIRED
\begin{keywords}
  chemical reaction networks, stability, autocatalytic, mass-action system, decomposition
\end{keywords}

% REQUIRED
\begin{AMS}
34D20, 80A30, 93C15, 93D20
\end{AMS}

\section{Introduction}

Chemical reaction networks (CRNs) have a pivotal role in various fields including chemistry, system biology, molecular biology as well as other potential applications. The research on CRNs aims at studying the connection between the networks' structure and underlying dynamical characteristics, e.g. oscillations, persistence, stability, etc. Thereinto, the issue of stability for CRNs, especially in the mass-action kinetics setting, has received considerable critical attention, which is also the concern in the current paper.

The study refers to the stability for CRNs with general structure is difficult in general while
a number of researchers \cite{Angeli2009A,Craciun2006Multiple,Feinberg1972Complex,Feinberg1995The,Finberg1987Chemical,Finberg1988Chemical,Szederke2011Finding} have reported that certain networks equipped with special structure (e.g. reversible, or weakly reversible) exhibit convergence behaviors. Horn and Jackson \cite{HornJackson1972General} proposed the well-known deficiency zero theorem, which stated that a weakly reversible deficiency zero mass action system (MAS) is complex balanced and has a unique equilibrium in every positive stoichiometric compatibility class. Meanwhile, each such equilibrium is proved to be asymptotically stable by taking the pseudo-Helmholtz function as a Lyapunov function. Later, this result was extended by Feinberg \cite{Finberg1988Chemical}, named as the deficiency one theorem, in which the deficiency can be relaxed, i.e., not necessarily equal to zero. Another way to reach the stability of equilibria for complex balanced MASs (also contains detailed balanced MASs) was introduced in \cite{van2013mathematical}, which established a compact mathematical formulation to exhibit the dynamics of the network through the complex graph.
%It has been shown that the equilibria of the complex balanced CRN (also contains detailed CRN) are asymptotically stable by taking the pseudo-Helmholtz function as a Lyapunov function.
%complex balanced CRN (also contains detailed CRN) has a unique positive equilibrium in every positive stoichiometric compatibility class, and meanwhile, each such equilibirum is proved to be asymptotic stability by taking the pseudo-Helmholtz function as a Lyapunov funtion.
Further, the studies of \cite{Johnston2011Linear,Johnston2012Dynamical} have applied the results on complex balanced MASs to more CRNs. It says that if an MAS can be transformed into a complex balanced MAS through the linear conjugacy approach, then the former shares the same stability as the latter.
Several other attempts have been made to the stability for more CRNs by constructing their corresponding Lyapunov functions. For instance, the scaling limits of
non-equilibrium potential \cite{Anderson2015Lyapunov} were taken as the Lyapunov function for birth-death processes from a microscopic perspective; the piecewise linear in rates Lyapunov function \cite{Alradhawi2016New} was put forward for some balanced MASs;
the paper \cite{Ke2019Complex} developed a generalized pseudo-Helmholtz function to the stability analysis for a general balanced MAS by taking advantage of the reconstruction and reverse reconstruction strategies. In the meanwhile, the generalized pseudo-Helmholtz function also succeeded in settling the stability for complex-balanced-produced MASs \cite{Wu2020A}. Recently, Fang and Gao \cite{Fang2015Lyapunov} proposed a systematic approach called Lyapunov function PDEs to the stability for general CRNs, which has been validated well on complex balanced, $1$-dimensional, special high dimensional and complex balanced produced CRNs so far.

However, except for the above special cases, in most cases, such as the biological systems, many CRNs are non-weakly reversible, of any deficiency, and of any arbitrary dimension. More importantly, when the dimensions of the networks become large, the associated stability problem becomes challenging. Especially, as the rapid development of systems biology and synthetic biology, studying the dynamical behaviors of complex and large biochemical reactions has drawn great attention.  Some investigators focuses on the properties of large-scale CRNs by joining small networks (e.g., cross-talk) or decomposing networks (e.g., inhibition), like identifiability \cite{gross2020joining}, multistationarity, stability \cite{Lu2020} and stationary distribution \cite{hoessly2019stationary}. They mainly use the information of the small parts to infer the properties of the large network synthesized by these parts.
In particular, these contributions will promote the research on cross-talk \cite{gruning2010regulatory,lopezotin2010the,decraene2009crosstalk}, which refers to the situation where two or more signaling pathways with the same components affect each other, and shows great application potential in medicine.
% when two or more CRNs with shared species or when large CRNs can be divided into several coupled small CRNs from the mathematical perspective.

In line with the above mentioned studies, this paper is devoted to deriving the stability of large-scale CRNs by decomposing them into low dimensional networks and/or parts having special structures (e.g. reversible, weakly reversible). Note that the networks we are concerned with are all non-weakly reversible, of any dimension, and of any deficiency. To be specific, we first show the results that if the decomposition of a CRN contains a complex balanced CRN and several $1$-dimensional/two-species CRNs, then we can determine its stability by utilizing the information given by subnetworks. i.e. the corresponding Lyapunov functions for the subnetworks. These results cover two basic cases (i) the subnetworks have disjoint species sets, and (ii) the subnetworks have shared species.
Further, such a dimension reduction decomposition approach is applied to analyze the stability for some practical biological reactions, including the autophosphorylation of PAK1 and Aurora B kinase, autocatalytic cycles, etc. Moreover, we deal with a common class of autocatalytic reactions as an application. It proves that the stability for such CRNs can be captured in terms of the characteristics of the small parts: two-species autocatalytic CRNs generated from decomposition. In some sense, the present study may offer some significant insights into the field of stability analysis to synthetic biology and artificial life.

%here we start from a common class of autocatalytic reactions, which is an active area of research on biological systems (e.g., DNA replications \cite{Plasson2011Autocatalyses}, population evolution, etc), life and artificial life \cite{Kauffman1995At,Hordijk2004,Virgo2016Complex} (to describe self-organizing systems), and chemistry.
%We prove that the stability for such CRNs can be captured in terms of the characteristics of the small parts: two-species autocatalytic CRNs coming from decomposition. Further, we continue to show the results that if the decomposition of a CRN contains a complex balanced CRN and several autocatalytic CRNs/ $1$-dimensional CRNs, then we can determine its stability utilizing the information given by subnetworks. In essence, the decomposition method we proposed is dimensionality reduction. In turn, we also give the stability results on the case that two or more specific CRNs mentioned above compose into a single large one. Remarkably, we apple our results to some biochemical systems, including the autophosphorylation of
%PAK1 and Aurora B kinase, autocatalytic cycles, etc. Therefore, in some sense, the present study may offer some significant insights into the field of stability analysis to synthetic biology and artificial life.

The remaining part of the paper proceeds as follows. %Section $2$ revisits some crucial concepts and known results about CRNs as well as stating the motivation of this study. 
Section $2$ states the motivation of this study.
In section $3$, we discuss the definition of decomposition and the decomposition technique that serves for capturing the stability of a CRN. Section $4$ suggests a kind of special decomposition that includes two-species subnetworks, and the sufficient condition to render stability is also presented in this special case. In section $5$, we consider some applications of the proposed methods to autocatalytic CRNs. Finally, section $6$ concludes the paper.

\noindent{\textbf{Notation:}}\\
\rule[1ex]{\columnwidth}{0.8pt}
\begin{description}
   \item[\hspace{0em}{$\mathbb{R}^n, \mathbb{R}^n_{\geq 0}, \mathbb{R}^n_{>0}$}]:
	$n$-dimensional real space, non-negative real space, positive real space, respectively.
	\item[\hspace{0em}{$\mathbb{Z}^n_{\geq 0}$}]: $n$-dimensional non-negative integer space.
	\item[\hspace{0em}{$x^{v_{\cdot i}}$}]: $x^{v_{\cdot i}}=\prod_{j=1}^{d}x_{j}^{v_{ji}}$, where $x\in \mathbb{R}^{d}, v_{\cdot i}\in\mathbb{Z}^{d}$ and $0^{0}=1$.
	%\item[\hspace{-0.5em}{$\frac{x}{y}$}]: %$\frac{x}{y}=(\frac{x_1}{y_1}, \cdots, \frac{x_n}{y_n}) $, where $x\in\mathbb{R}^{n}$, $y\in\mathbb{R}^{n}_{>0}$.
	\item[\hspace{0em}{$\mathrm{Ln}(x)$}]: $\mathrm{Ln}(x)=\left(\ln{x_{1}}, \cdots, \ln{{x}_{n}} \right)^{\top}$, where $x\in\mathbb{R}^{n}_{>{0}}$.
	\item[\hspace{0em}{$\mathscr{C}^i(\cdot~ ; *)$}]:
	 the set of $i$th continuous differentiable functions from "$\cdot$" to "*".
	 \item[\hspace{0em}{s.t.}]: such that.
	 \item[\hspace{0em}{$e_i$}]: unit vector with $i$th element being one while others being zero.
\end{description}
\rule[1ex]{\columnwidth}{0.8pt}

\section{Motivation statement}
A CRN $\mathcal{N}=(\mathcal{S,C,R})$ equipped with mass-action kinetics is called an MAS, denoted by $\mathcal{M}=(\mathcal{S,C,R,K})$, where $\mathcal{S}$ is the species set, $\mathcal{C}$ the complex set, $\mathcal{R}$ the reaction set, and $\mathcal{K}$ the reaction rate constant set. More terminologies about CRNs and MASs can be found in \Luref{ap1}. CRNs or MASs can be grouped according to the dimension of stoichoimetric subspace into 1-dimensional, 2-dimensional networks, etc., or according to structure into weakly reversible and non-weakly reversible networks. For an $\mathcal{M}=(\mathcal{S,C,R,K})$ with the dynamics of
\begin{equation}\label{eq:mas0}
\frac{\mathrm{d}x}{\mathrm{d}t}=\Gamma \Xi(x)=k_ix^{v_{.i}},~~~x\in \mathbb{R}_{\geq0}^{n},
\end{equation}
the stability issue in the Lyapunov sense is about how to construct an available Lyapunov function suggesting the equilibrium $x^*$ constrained by $\Gamma \Xi(x^*)=0$ stable. It was reported that there have been available Lyapunov functions for complex balanced networks \cite{HornJackson1972General} (i.e., the classical pseudo-Helmholtz free energy function \cref{eq:Helmholtz}) and 1-dimensional networks \cite{Fang2015Lyapunov} (i.e., \cref{sub1Lya}).

Although it becomes relatively routine to generate Lyapunov functions for the above two classes of networks, there are no systematic methods for others to construct the Lyapunov functions, especially for those extremely complicated ones (or large-scale) characterized by high number of species, high dimension, high deficiency, and/or non-special structure, like metabolic regulatory networks, gene regulatory networks in biological systems. It will pose a great challenge to perform the stability analysis for them. However if we observe those large-scale networks from the viewpoint of part or locality, the subnetwork may be very special, even to be weakly reversible and/or $1$-dimensional. We use the following example \cite{hoessly2019stationary} to illustrate our idea.

\begin{example}\label{examp2}
 The network route takes \begin{align}\label{eg:motivation}
&\xymatrix{S_{1} \ar @{ -^{>}}^{}  @< 1pt> [r]
	& S_{2} \ar  @{ -^{>}}^{}  @< 1pt> [l]
	\ar @{ -^{>}}^{}  @< 1pt> [r]
	&S_3 \ar  @{ -^{>}}^{}  @< 1pt> [l]
	\ar @{ -^{>}}^{}  @< 1pt> [r]
	&S_4 \ar  @{ -^{>}}^{}  @< 1pt> [l]},
\notag
\\
&\xymatrix{S_1 +S_2\ar[r]^-{}& 2S_2},~~
\xymatrix{S_2 +S_3\ar[r]^-{}& 2S_2},\\ \notag
&\xymatrix{
	2S_4\ar @{ -^{>}}^{}  @< 1pt> [r]& S_3+S_4 \ar  @{ -^{>}}^{}  @< 1pt> [l] },~~
\xymatrix{3S_3\ar @{ -^{>}}^{}  @< 1pt> [r]& 3S_5 \ar  @{ -^{>}}^{}  @< 1pt> [l]},\\\notag
&\qquad \qquad \xymatrix{2S_1\ar[r]^-{}&2S_3\ar[ld]^-{}\\
	\ar[u]^-{} S_1+S_3 &},	
\end{align}
which looks rather complicated, and has dimension $5$, deficiency $3$ and non-weakly reversible structure. However, its subnetworks, as an example of a kind of decomposition, may include a weakly reversible network on $\{S_1,S_3,S_5\}$ given by
\begin{align*}
	\mathcal{N}_0:~~\xymatrix{3S_3\ar @{ -^{>}}^{}  @< 1pt> [r]& 3S_5 \ar  @{ -^{>}}^{}  @< 1pt> [l]},
	\xymatrix{2S_1\ar[r]^-{}&2S_3\ar[ld]^-{}\\
		\ar[u]^-{} S_1+S_3 &},
\end{align*}
and three $1$-dimensional CRNs on $\{S_1,S_2\}, \{S_2,S_3\}, \{S_3,S_4\}$ successively,
\begin{align*}
	&\mathcal{N}_1:~~\xymatrix{S_{1} \ar @{ -^{>}}^{}  @< 1pt> [r]
		& S_{2}\ar  @{ -^{>}}^{}  @< 1pt> [l]},~~
	\xymatrix{S_1 +S_2\ar[r]^-{}& 2S_2},\\
	&\mathcal{N}_{2}:~~\xymatrix{ S_{2}
		\ar @{ -^{>}}^{}  @< 1pt> [r]
		&S_3 \ar  @{ -^{>}}^{}  @< 1pt> [l]},~~
	\xymatrix{S_2 +S_3\ar[r]^-{}& 2S_2},\\
	&\mathcal{N}_3:~~\xymatrix{S_3\ar @{ -^{>}}^{}  @< 1pt> [r]& S_4 \ar  @{ -^{>}}^{}  @< 1pt> [l]},~~
	\xymatrix{2S_4\ar @{ -^{>}}^-{}  @< 1pt> [r]& S_3+S_4 \ar  @{ -^{>}}^-{}  @< 1pt> [l]}.
\end{align*}
\end{example}
If every subnetwork is stable (relatively easy to be checked), is it possible for the original network to be stable too, or can the Lyapunov function for the original network be derived from those known ones for special subnetworks? This naive idea motivates us to analyze the stability of a large-scale or complicated network through decomposing it into some small-scale and/or simple subnetworks for which the Lyapunov functions are known.

\section{Decomposition technique}\label{sec3}
There are two possibilities in decomposing a network into some subnetworks: one is that all subnetworks have no disjoint species, the other is that there are shared species among subnetworks. We discuss these two cases in this section, but give the definition of decomposition first.

\subsection{Decomposition}
\begin{definition}\emph{(Decomposition).} \label{decomposition}
For an $\mathcal{M}=(\mathcal{S,C,R,K})$ given by \cref{eq:mas0} that admits an equilibrium $x^*\in\mathbb{R}^{n}_{>0}$, if there are finitely many $\mathcal{M}^{(p)}=(\mathcal{S}^{(p)},\mathcal{C}^{(p)},\mathcal{R}^{(p)},\mathcal{K}^{(p)})$ satisfying
\begin{enumerate}[itemindent=3em]
	\item [\emph{(1)}]  $\mathcal{S}=\cup~\mathcal{S}^{(p)}, \mathcal{C}=\cup~\mathcal{C}^{(p)}, \mathcal{R}=\cup~\mathcal{R}^{(p)},\mathcal{K}=\cup~\mathcal{K}^{(p)}$;
	\item [\emph{(2)}] every $\mathcal{M}^{(p)}=(\mathcal{S}^{(p)},\mathcal{C}^{(p)},\mathcal{R}^{(p)},\mathcal{K}^{(p)})$ admits an equilibrium $x^{*^{(p)}}\in \mathbb{R}^{n_p}_{>0}$, in which each entry is the same as that in $x^*$ if they represent the concentration of the same species in $\mathcal{S}$,
\end{enumerate}
then $\mathcal{N}=(\mathcal{S,C,R})$ is said to be decomposable.
\end{definition}

Based on the decomposition definition, we have the following property about the equilibrium.

\begin{property}\label{pro:eq1}
Consider an $\mathcal{M}=(\mathcal{S,C,R,K})$ governed by \cref{eq:mas0}, and $x^*\in\mathbb{R}^{n}_{>0}$ is an equilibrium in $\mathcal{M}$. Assume $\mathcal{M}$ can be decomposed into a complex balanced $\mathcal{M}^{(0)}=(\mathcal{S}^{(0)},\mathcal{C}^{(0)},\mathcal{R}^{(0)},\mathcal{K}^{(0)})$ and $\ell$ 1-dimensional $\mathcal{M}^{(p)}=(\mathcal{S}^{(p)},\mathcal{C}^{(p)},\mathcal{R}^{(p)},\mathcal{K}^{(p)})$, $p=1,...,\ell$. If $\forall p\in\{1,\cdots,\ell\}$, $x^{*^{(p)}}\in\mathbb{R}^{n_p}_{>0}$, as defined in \cref{decomposition}, is a reaction vector balanced equilibrium in $\mathcal{M}^{(p)}$, then $x^*$ is a generalized balanced equilibrium in $\mathcal{M}$.
\end{property}

\begin{proof}
Let each entry of $x^{*^{(0)}}\in\mathbb{R}^{n_0}_{>0}$ equals to the entry of $x^*$ pointing to the same species in $\mathcal{S}$, then $x^{*^{(0)}}$ is a complex balanced equilibrium in $\mathcal{M}^{(0)}$. Based on \cref{def:generalized balanced} and \cref{decomposition}, it is straightforward to reach the conclusion.
\end{proof}

Note that the network decomposition will not generate or lose species, complex and reaction. Since every subnetwork is relatively simple and smaller-scale compared with the original network, it is possible to analyze the stability of every subnetwork and accordingly infer that of the original network. There are two cases when implementing network composition, one is $\forall p\neq q,~ \mathcal{S}^{(p)} \cap \mathcal{S}^{(q)}=\emptyset$, the other is $\mathcal{S}^{(p)} \cap \mathcal{S}^{(q)}\neq\emptyset$.
In the following, we will discuss these two situations respectively, and moreover, the subnetworks are limited to be complex balanced and $1$-dimensional for convenience of stability analysis.

\subsection{Decomposition: subnetworks with disjoint species sets}
This case is relatively trivial, especially when the subnetworks are complex balanced and $1$-dimensional. In the following we directly give a conclusion.

\begin{theorem}\label{thm:disjoint}
	For an $\mathcal{M}=(\mathcal{S,C,R,K})$ governed by \cref{eq:mas0}, let $x^*\in\mathbb{R}^{n}_{>0}$ be an equilibrium in $\mathcal{M}$. Suppose that $\mathcal{M}$ can be decomposed into a complex balanced $\mathcal{M}^{(0)}=(\mathcal{S}^{(0)},\mathcal{C}^{(0)},\mathcal{R}^{(0)},\mathcal{K}^{(0)})$ and $\ell$ 1-dimensional $\mathcal{M}^{(p)}=(\mathcal{S}^{(p)},\mathcal{C}^{(p)},\mathcal{R}^{(p)},\mathcal{K}^{(p)})$, $p=1,...,\ell$, and moreover, $\forall p\neq q\in\{0,1,...,\ell\},~ \mathcal{S}^{(p)} \cap \mathcal{S}^{(q)}=\emptyset$. If for every 1-dimensional subnetwork there is $\omega^\top_p \frac{\partial}{\partial{x^{(p)}}} h(x^{*^{(p)}},1)<0$, then $\mathcal{M}$ is locally asymptotically stable at $x^*$, where $h(x^{*^{(p)}},1)$ and $\omega_p$ follow the definition of \cref{eq:sub1Lya}, and $x^{*^{(p)}}\in \mathbb{R}^{n_p}_{>0}$ is an equilibrium in $\mathcal{M}^{(p)}$, $p=1,...,\ell,~ n_0+\sum_{p=1}^{\ell}n_p=n.$
\end{theorem}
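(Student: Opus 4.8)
The plan is to exploit that disjoint species sets force the governing ODE \cref{eq:mas0} to split into independent coordinate blocks, one per subnetwork, and then to build a Lyapunov function for $\mathcal{M}$ at $x^*$ simply by adding the known Lyapunov functions of the blocks. First I would record the block structure: since $\mathcal{S}=\bigcup_{p=0}^{\ell}\mathcal{S}^{(p)}$ with the $\mathcal{S}^{(p)}$ pairwise disjoint, reorder the species so that $x=(x^{(0)},x^{(1)},\dots,x^{(\ell)})\in\mathbb{R}^{n_0}_{\ge0}\times\cdots\times\mathbb{R}^{n_\ell}_{\ge0}$. Every reaction of $\mathcal{M}$ belongs to some $\mathcal{R}^{(p)}$ and involves only species of $\mathcal{S}^{(p)}$, so its reaction vector lies in the $p$-th coordinate subspace; hence \cref{eq:mas0} decouples into $\dot x^{(p)}=\Gamma^{(p)}\Xi^{(p)}(x^{(p)})$ for $p=0,1,\dots,\ell$, the stoichiometric subspace of $\mathcal{M}$ is the internal direct sum of those of the $\mathcal{M}^{(p)}$, and the positive stoichiometric compatibility class of $x^*$ in $\mathcal{M}$ is the Cartesian product of the positive classes of the $x^{*^{(p)}}$ in the $\mathcal{M}^{(p)}$. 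By \cref{decomposition}, $x^{*^{(p)}}$ is an equilibrium of each block.

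Next I would invoke the two available Lyapunov constructions, block by block. For the complex balanced $\mathcal{M}^{(0)}$, Horn--Jackson \cite{HornJackson1972General} gives that the classical pseudo-Helmholtz function \cref{eq:Helmholtz}, call it $g_0(x^{(0)})$, satisfies $\dot g_0\le 0$ on $\mathbb{R}^{n_0}_{>0}$ with equality only at complex balanced equilibria; as $x^{*^{(0)}}$ is the unique such equilibrium in its compatibility class, $g_0$ is a strict Lyapunov function for the block at $x^{*^{(0)}}$. For each $1$-dimensional $\mathcal{M}^{(p)}$, the construction of \cite{Fang2015Lyapunov} supplies the Lyapunov function \cref{eq:sub1Lya}, call it $V_p(x^{(p)})$; here I would check that the hypothesis $\omega^\top_p\frac{\partial}{\partial x^{(p)}}h(x^{*^{(p)}},1)<0$ is precisely the sign condition appearing in that reference which makes $\dot V_p<0$ in a punctured neighborhood of $x^{*^{(p)}}$ inside the $p$-th compatibility class, so each $V_p$ is a strict local Lyapunov function.

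Then I would assemble $V(x)=g_0(x^{(0)})+\sum_{p=1}^{\ell}V_p(x^{(p)})$. Restricted to the (product) compatibility class of $x^*$, $V$ is continuous, vanishes at $x^*$, and is positive definite relative to $x^*$ near $x^*$ because it is a sum of functions in disjoint variable blocks, each positive definite relative to its own equilibrium; the product structure of the class ensures no mixed direction spoils this. By the decoupling, $\dot V(x)=\dot g_0(x^{(0)})+\sum_{p=1}^{\ell}\dot V_p(x^{(p)})$, a sum of terms each $\le0$ near $x^*$, and $\dot V(x)=0$ forces every block to sit at its own equilibrium, i.e. $x=x^*$; hence $\dot V<0$ on a punctured neighborhood of $x^*$ in the class. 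Lyapunov's theorem then yields local asymptotic stability of $x^*$ in $\mathcal{M}$.

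I expect the main obstacle to be the bookkeeping around stoichiometric compatibility classes rather than any hard estimate: one must verify that the class of $x^*$ in $\mathcal{M}$ genuinely factors as the product of the subnetwork classes (which is what makes $V$ positive definite on it and lets $\dot V$ be strictly negative off $x^*$), and that the $1$-dimensional result of \cite{Fang2015Lyapunov} is being applied on the correct low-dimensional class so that the stated condition on $h$ delivers strict decrease. A secondary subtlety is that $\dot g_0$ is only negative semidefinite globally, so locality together with uniqueness of the complex balanced equilibrium in the class is what upgrades it --- and hence the combined $\dot V$ --- to a strict Lyapunov inequality near $x^*$.
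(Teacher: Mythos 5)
Your proposal is correct and takes essentially the same route as the paper: the paper's own ``proof'' is a one-line deferral to Theorem 27 of \cite{Fang2015Lyapunov}, whose content is exactly your additive block-Lyapunov construction (pseudo-Helmholtz term for the complex balanced block plus the $1$-dimensional Lyapunov functions on decoupled coordinate blocks), the same construction the paper carries out explicitly in Appendix B for the shared-species analogue. One small correction of emphasis: the hypothesis $\omega^\top_p \frac{\partial}{\partial{x^{(p)}}} h(x^{*^{(p)}},1)<0$ is what delivers local strict convexity (positive definiteness) of $V_p$ on the compatibility class, since it is equivalent to $\omega^\top_p\nabla\tilde{u}_p(\tilde{x}^{*^{(p)}})>0$ via the implicit function theorem applied to $h(x,\tilde{u})=0$, whereas the dissipation $\dot V_p\le 0$ with equality only at the equilibrium follows automatically from the reaction-vector structure (as in the estimate $\dot f^{(p)}\le \sum_i k^{(p)}_i x^{v^{(p)}_{\cdot i}}(\exp\{\beta_i\omega^\top_p\nabla f^{(p)}\}-1)=0$ in the paper's Appendix B), not from the sign condition.
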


\begin{proof}
The result is similar to Theorem 27 in \cite{Fang2015Lyapunov} where a composition network containing a complex balanced MAS and some 1-dimensional MASs is proved asymptotically stable, and we thus omit it here.
\end{proof}

\subsection{Decomposition: subnetworks with shared species}\label{sec:3.3}
The situation will become intractable if the subnetworks share species in network decomposition. In this case, the dynamic behaviors of the subsystems will affect each other through the shared species. However, we manage to derive a sufficient condition to suggest the asymptotic stability of a class of networks that can be decomposed into a complex balanced subnetwork and a few $1$-dimensional subnetworks but with species shared among subnetworks.

\begin{theorem}\label{thm:com-1}
	Given an $\mathcal{M}=(\mathcal{S,C,R,K})$ with the dynamics \cref{eq:mas0} and an equilibrium $x^*\in\mathbb{R}^n_{>0}$, assume it can be decomposed into a complex balanced $\mathcal{M}^{(0)}$ and $\ell$ 1-dimensional $\mathcal{M}^{(p)}$'s, $p=1,\cdots,\ell$, and moreover,  $\forall p,q\in\{1,\cdots,\ell\}$, $E_p:= \mathcal{S}^{(0)}\bigcap\mathcal{S}^{(p)}\neq\emptyset$ , $\mathcal{S}^{(p)}\bigcap \mathcal{S}^{(q)}\in \bigcup^{\ell}_{p=1}E_p$ or $\emptyset$, and every $\mathcal{M}^{(p)}$ is reaction vector balanced. Then $\mathcal{M}$ is locally asymptotically stable at $x^*$ if for every $p\in\{1,...,\ell\}$ the following conditions are true
	\begin{itemize}[itemindent=3em]
		\item [\emph{(1)}] for all $S_j\in E_p$ in $v^{(p)}_{\cdot l}\longrightarrow v'^{(p)}_{\cdot l}$ there is $v^{(p)}_{\cdot m}\longrightarrow v'^{(p)}_{\cdot m}$ such that $v^{(p)}_{jm}=v'^{(p)}_{jl},v'^{(p)}_{jm}=v^{(p)}_{jl}$, and $v'^{(p)}_{jm}-v^{(p)}_{jm}=v^{(p)}_{jl}-v'^{(p)}_{jl}=1$;
		\\
			\item[\emph{(2)}] denote 	$\tilde{x}^{(p)}=(\bigotimes_{S_j\in \mathcal{S}^{(p)}\backslash E_p} x^{(p)}_j)^\top$,
		$L_{\omega_p}=\{l:v'^{(p)}_{\cdot l}- v^{(p)}_{\cdot l}=(\bigotimes_{S_i\in E_p}{1}, \omega^\top_p)^\top\}$, and $R_{\omega_p}=\{l:v'^{(p)}_{\cdot l}- v^{(p)}_{\cdot l}=-(\bigotimes_{S_i\in E_p}{1}, \omega^\top_p)^\top\}$, it holds that
		\begin{align}\label{thm:con-com-1}
			\omega^\top_p \nabla \tilde{u}_p(\tilde{x}^{(p)})|_{\tilde{x}^{(p)}=\tilde{x}^{*^{(p)}}}>0,
		\end{align}
		where
		\begin{align*}
			\tilde{u}_p(\tilde{x}^{(p)})= \frac{\prod_{S_i\in E_p}{x^*_i}^{(p)}\sum_{R_{\omega_p}}k^{(p)}_{l}\prod_{S_j\in \mathcal{S}^{(p)}\backslash E_p} {x^{(p)}_j}^{v^{(p)}_{jl}}}{\sum_{L_{\omega_p}}k^{(p)}_{l}\prod_{S_j\in \mathcal{S}^{(p)}\backslash E_p} {x^{(p)}_j}^{v^{(p)}_{jl}}}.
		\end{align*}
	
	\end{itemize}
\end{theorem}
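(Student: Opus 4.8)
The plan is to glue the classical pseudo-Helmholtz function of the complex balanced block $\mathcal{M}^{(0)}$ to the one-dimensional Lyapunov functions of the blocks $\mathcal{M}^{(p)}$ --- with every \emph{shared} concentration frozen at its equilibrium value in the latter --- and to show that the only coupling between the blocks, which passes through the shared species $\bigcup_p E_p\subseteq\mathcal{S}^{(0)}$, contributes only non-positive cross terms to the total derivative $\dot V$.

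First I would write down the decomposed vector field. Since the reactions are partitioned among the subnetworks and the hypothesis on the pairwise intersections forces every species of $\mathcal{S}^{(p)}\setminus E_p$ to occur only in $\mathcal{M}^{(p)}$, the dynamics \cref{eq:mas0} splits as
\[
\frac{\dd x}{\dd t}=f^{(0)}(x^{(0)})+\sum_{p=1}^{\ell}\gamma_p\,\phi_p(x^{(p)}),
\]
where $f^{(0)}$ is the complex balanced field of $\mathcal{M}^{(0)}$ (supported on $\mathcal{S}^{(0)}$), $\gamma_p$ is the reaction vector of the one-dimensional $\mathcal{M}^{(p)}$ --- by condition (1) all its $E_p$-entries equal $1$, so $\gamma_p=(1,\dots,1,\omega_p^{\top})^{\top}$ in the ordering $(E_p,\mathcal{S}^{(p)}\setminus E_p)$ --- and $\phi_p(x^{(p)})=\sum_{l\in L_{\omega_p}}k^{(p)}_l(x^{(p)})^{v^{(p)}_{\cdot l}}-\sum_{l\in R_{\omega_p}}k^{(p)}_l(x^{(p)})^{v^{(p)}_{\cdot l}}$ is the scalar rate along that direction, which vanishes at $x^{*^{(p)}}$ because $\mathcal{M}^{(p)}$ is reaction vector balanced. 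Splitting each monomial into its $E_p$-part and its $(\mathcal{S}^{(p)}\setminus E_p)$-part, and using the mirror pairing of condition (1) (which matches a reaction of $R_{\omega_p}$ to one of $L_{\omega_p}$ with the $E_p$-stoichiometry transposed and the $E_p$-reactant coefficients lowered by one), I would factor
\[
\phi_p(x^{(p)})=M^{(p)}_E(x^{(p)})\Bigl[\,\widehat A_p(\tilde x^{(p)})-\Bigl(\textstyle\prod_{S_i\in E_p}x_i\Bigr)\widehat B_p(\tilde x^{(p)})\Bigr],
\]
with $M^{(p)}_E>0$, $\widehat A_p=\sum_{l\in L_{\omega_p}}k^{(p)}_l\prod_{S_j\in\mathcal{S}^{(p)}\setminus E_p}(x^{(p)}_j)^{v^{(p)}_{jl}}>0$ and $\widehat B_p=\sum_{l\in R_{\omega_p}}k^{(p)}_l\prod_{S_j\in\mathcal{S}^{(p)}\setminus E_p}(x^{(p)}_j)^{v^{(p)}_{jl}}>0$. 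Comparison with the statement gives $\tilde u_p=(\prod_{S_i\in E_p}x^{*}_i)\,\widehat B_p/\widehat A_p$, hence $\tilde u_p(\tilde x^{*^{(p)}})=1$ (from $\phi_p(x^{*^{(p)}})=0$) and, for every positive state, the sign identity
\[
\operatorname{sign}\phi_p(x^{(p)})=-\operatorname{sign}\Bigl(\ln\textstyle\prod_{S_i\in E_p}\frac{x_i}{x^{*}_i}+\ln\tilde u_p(\tilde x^{(p)})\Bigr).
\]

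Next I would set $V(x)=g_0(x^{(0)})+\sum_{p=1}^{\ell}V_p(\tilde x^{(p)})$, where $g_0$ is the pseudo-Helmholtz function \cref{eq:Helmholtz} of $\mathcal{M}^{(0)}$ at $x^{*^{(0)}}$ (so $\nabla g_0=\mathrm{Ln}(x^{(0)}/x^{*^{(0)}})$) and $V_p$ is the one-dimensional Lyapunov function \cref{eq:sub1Lya} of the network obtained from $\mathcal{M}^{(p)}$ by fixing each $x_i$, $S_i\in E_p$, at $x^{*}_i$; thus $V_p$ depends on $\tilde x^{(p)}$ only and satisfies the transport identity $\omega_p^{\top}\nabla V_p(\tilde x^{(p)})=\ln\tilde u_p(\tilde x^{(p)})$. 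Because $g_0$ and $f^{(0)}$ live on $\mathcal{S}^{(0)}$ while $\tilde x^{(p)}$ moves only along $\omega_p$ at rate $\phi_p$, a direct computation yields
\[
\dot V=\underbrace{\mathrm{Ln}(x^{(0)}/x^{*^{(0)}})^{\top}f^{(0)}(x^{(0)})}_{\le 0}+\sum_{p=1}^{\ell}\phi_p(x^{(p)})\Bigl[\ln\textstyle\prod_{S_i\in E_p}\frac{x_i}{x^{*}_i}+\ln\tilde u_p(\tilde x^{(p)})\Bigr]\le 0,
\]
the first term by complex balance of $\mathcal{M}^{(0)}$ (Horn--Jackson) and each remaining summand by the sign identity. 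Positive definiteness of $V$ near $x^*$ relative to the stoichiometric compatibility class follows from strict convexity of $g_0$ together with condition (2): along $s\mapsto\tilde x^{*^{(p)}}+s\omega_p$ one has $\tfrac{\dd}{\dd s}V_p=\ln\tilde u_p$, which vanishes at $s=0$ while its derivative there equals $\omega_p^{\top}\nabla\tilde u_p(\tilde x^{*^{(p)}})>0$, so $V_p$ is strictly convex along $\omega_p$. Finally, on $\{\dot V=0\}$ both summands vanish --- the first forces $x^{(0)}$ to be a complex balanced equilibrium of $\mathcal{M}^{(0)}$, hence $f^{(0)}=0$, and the second forces $\phi_p=0$ for all $p$ --- so $\dd x/\dd t=0$ there; thus $\{\dot V=0\}$ consists of equilibria of $\mathcal{M}$, $x^*$ is the only one near $x^*$ within its compatibility class, and LaSalle's invariance principle gives local asymptotic stability. (This is the shared-species counterpart of \cref{thm:disjoint}.)

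The step I expect to be the real obstacle is the second one: extracting from the combinatorial pairing in condition (1) that the one-dimensional structure of $\mathcal{M}^{(p)}$ survives when the shared species are allowed to vary --- i.e.\ that the $E_p$-reactant monomials of all reactions in $L_{\omega_p}$ coincide, and that those of $R_{\omega_p}$ equal that common monomial times $\prod_{S_i\in E_p}x_i$ --- which is exactly what makes the factorization of $\phi_p$ and hence the sign identity valid; and, secondarily, checking that $V_p$ (the \cref{eq:sub1Lya} function of the reduced subnetwork) is well defined and that $V$ is a $\mathscr{C}^1$ function which is positive definite on the compatibility class with $x^*$ an isolated equilibrium there. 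Once the factorization, the transport identity $\omega_p^{\top}\nabla V_p=\ln\tilde u_p$ and condition (2) are in place, the bound on $\dot V$ and the invariance argument are routine.
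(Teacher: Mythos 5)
Your proposal is correct and follows essentially the same route as the paper's Appendix B proof: the same composite Lyapunov function $V=f^{(0)}+\sum_p V_p(\tilde x^{(p)})$ (pseudo-Helmholtz for $\mathcal{M}^{(0)}$ plus the one-dimensional integrals \cref{sub1Lya} in the non-shared variables, with the shared concentrations frozen at $x^*$), positive definiteness from condition (2) via $\omega_p^\top\nabla\tilde u_p>0$, and dissipativity by splitting $\dot V$ into the complex-balanced part and the per-subnetwork terms whose sign is controlled by $\ln\bigl(\prod_{S_i\in E_p}\tfrac{x_i}{x_i^*}\,\tilde u_p\bigr)$ — your "sign identity" is equivalent to the paper's $z\le e^z-1$ bound. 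The factorization of the $E_p$-monomials that you flag as the real obstacle is indeed the load-bearing use of condition (1), and the paper glosses over it just as much as you do, so no gap relative to the published argument.
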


\begin{proof}
	The detailed proof can be found in Appendix 2. \Luref{Thm3.3proof}.
\end{proof}

\section{Special decomposition: a complex balanced subnetwork and some two-species subnetworks with shared species}\label{sec4}
In this section, we weaken the conditions presented in \cref{thm:com-1} by considering the decomposition including two-species subnetworks, which are $1$-dimensional essentially, instead of general 1-dimensional networks. Further, we extend the previous decomposition method to deal with more complex CRNs for capturing stability.

\subsection{Two-species CRNs and stability}\label{sec:4.1}
Consider a class of two-species CRNs with dimension $1$ defined on the species set $\{S_i, S_j\}$, given by
\begin{align}
&\xymatrix{L_{\omega}: v_{\cdot l} \ar[r]^-{k_{l}}  & v'_{\cdot l}},~~~
		&\xymatrix{R_{\omega}:v_{\cdot l} \ar[r]^-{k_{l}}  & v'_{\cdot l}},
\end{align}
 where $L_{\omega}=\{l:v'_{\cdot l}- v_{\cdot l}=\omega^\top=(w_i,w_j)\}$, and $R_{\omega}=\{l:v'_{\cdot l}- v_{\cdot l}=-\omega^\top=-(w_i,w_j)\}$. This class of CRNs satisfies that each $v_{il}=a$ $\forall l$ in $L_{\omega}$ and each $v_{jl}=b$ $\forall l$ in $R_{\omega}$.

 The stability of the two-species MASs can be naturally reached by adopting the Lyapunov function given in \cref{sub1Lya} for all $1$-dimensional MASs. However, we propose another one below for the same purpose, which yet will play an important role on weakening the conditions of \cref{thm:com-1} when implementing the network decomposition technique.

 \begin{lemma}\label{le:tw}
For a class of two-species CRNs defined above, suppose it is reaction vector balanced with an equilibrium $x^*=(x^*_i,x^*_j)\in \mathbb{R}^2_{>0}$, then the function
\begin{align}
f(x)=-\int^{x_i}_{x^*_i}w^{-1}_i\ln \frac{t^a}{c_{ij}\sum_{ R_{\omega}}k_lt^{v_{il}}}dt
		+
		\int^{x_j}_{x^*_j}w^{-1}_j\ln \frac{t^b}{c_{ij}\sum_{ L_{\omega}}k_lt^{v_{jl}}}dt
\end{align}
	with $c_{ij}=\frac{x^{*a}_i}{\sum_{ R_{\omega}}k_lx_i^{*^{v_{il}}}}=\frac{x^{*b}_i}{\sum_{ L_{\omega}}k_lx_j^{*^{v_{jl}}}}$
	can act as a Lyapunov function to suggest that
	$x^*$ is locally asymptotically stable if there are
	\begin{align}\label{con1}
		w^{-1}_i\sum_{R_{\omega}} k_l(a-v_{il})x^{*^{v_{il}-1}}_{i}<0
	\end{align}
	and
	\begin{align}\label{con2}
		w^{-1}_j\sum_{L_{\omega}} k_l(b-v_{jl})x^{*^{v_{jl}-1}}_{j}>0.
	\end{align}
	%\end{enumerate}
\end{lemma}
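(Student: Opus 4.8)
The plan is to verify directly that the proposed $f$ is a strict Lyapunov function at $x^*$ for the given two-species mass-action system, i.e.\ that $f$ has a strict local minimum at $x^*$ and that $\dot f<0$ in a punctured neighborhood of $x^*$ along trajectories. First I would record the dynamics: since the network is $1$-dimensional with reaction vector $\pm\omega^\top=\pm(w_i,w_j)$, the vector field is a scalar multiple of $\omega^\top$, namely
\begin{align*}
\frac{\mathrm{d}x}{\mathrm{d}t}=\Big(\sum_{L_\omega}k_l x^{v_{\cdot l}}-\sum_{R_\omega}k_l x^{v_{\cdot l}}\Big)\,\omega^\top,
\end{align*}
using the structural assumptions $v_{il}=a$ for $l\in L_\omega$ and $v_{jl}=b$ for $l\in R_\omega$ to write $\sum_{L_\omega}k_l x^{v_{\cdot l}}=x_i^{a}\sum_{L_\omega}k_l x_j^{v_{jl}}$ and $\sum_{R_\omega}k_l x^{v_{\cdot l}}=x_j^{b}\sum_{R_\omega}k_l x_i^{v_{il}}$. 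Reaction-vector balancedness at $x^*$ is exactly the statement that these two sums agree at $x^*$, which is what makes the constant $c_{ij}$ in the statement well defined (the two expressions for $c_{ij}$ coincide).

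Next I would compute the gradient of $f$. By the fundamental theorem of calculus, $\partial f/\partial x_i = -w_i^{-1}\ln\big(x_i^a/(c_{ij}\sum_{R_\omega}k_l x_i^{v_{il}})\big)$ and $\partial f/\partial x_j = w_j^{-1}\ln\big(x_j^b/(c_{ij}\sum_{L_\omega}k_l x_j^{v_{jl}})\big)$; the choice of $c_{ij}$ forces both logarithms to vanish at $x=x^*$, so $\nabla f(x^*)=0$. Then $\dot f=\nabla f(x)\cdot\frac{\mathrm{d}x}{\mathrm{d}t}=\big(\sum_{L_\omega}k_lx^{v_{\cdot l}}-\sum_{R_\omega}k_lx^{v_{\cdot l}}\big)\big(w_i\,\partial_i f+w_j\,\partial_j f\big)$, and substituting the two logarithmic derivatives the $w_i,w_j$ cancel, giving
\begin{align*}
\dot f=-\Big(\sum_{L_\omega}k_lx^{v_{\cdot l}}-\sum_{R_\omega}k_lx^{v_{\cdot l}}\Big)\Big(\ln\frac{x_i^a}{c_{ij}\sum_{R_\omega}k_lx_i^{v_{il}}}+\ln\frac{x_j^b}{c_{ij}\sum_{L_\omega}k_lx_j^{v_{jl}}}\Big).
\end{align*}
Here I would rewrite the argument of the combined logarithm as $\big(x_i^a\sum_{L_\omega}k_lx_j^{v_{jl}}\big)\big/\big(x_j^b\sum_{R_\omega}k_lx_i^{v_{il}}\big)=\big(\sum_{L_\omega}k_lx^{v_{\cdot l}}\big)\big/\big(\sum_{R_\omega}k_lx^{v_{\cdot l}}\big)$, using $c_{ij}^2$ cancellation and the monomial factorizations above. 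Writing $A=\sum_{L_\omega}k_lx^{v_{\cdot l}}$, $B=\sum_{R_\omega}k_lx^{v_{\cdot l}}$, both positive, we get $\dot f=-(A-B)\ln(A/B)\le 0$, with equality iff $A=B$, i.e.\ iff $\mathrm{d}x/\mathrm{d}t=0$. So $\dot f$ is negative semidefinite on the whole positive orthant and vanishes exactly on the equilibrium set.

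It remains to upgrade this to \emph{local asymptotic stability}, and this is where conditions \cref{con1}–\cref{con2} enter and where I expect the only real subtlety. The issue is twofold: (a) $f$ must be a genuine (strict) local Lyapunov function, so I must show $f$ has a strict local minimum at $x^*$ restricted to the stoichiometric compatibility line through $x^*$; and (b) on that line, the only point with $A=B$ near $x^*$ should be $x^*$ itself (so that LaSalle/strict-descent gives convergence). I would handle both by a Hessian computation: differentiating the gradient, $\partial^2 f/\partial x_i^2 = -w_i^{-1}\frac{d}{dx_i}\ln\big(x_i^a/\sum_{R_\omega}k_lx_i^{v_{il}}\big) = -w_i^{-1}\big(a/x_i - (\sum_{R_\omega}k_lv_{il}x_i^{v_{il}-1})/(\sum_{R_\omega}k_lx_i^{v_{il}})\big)$, and similarly for $\partial^2 f/\partial x_j^2$ (with opposite overall sign), while the mixed partial $\partial^2 f/\partial x_i\partial x_j=0$ since $f$ separates. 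Evaluating at $x^*$ and clearing the positive denominator $\sum_{R_\omega}k_lx_i^{*v_{il}}$, the sign of $\partial^2_i f(x^*)$ is the sign of $-w_i^{-1}\sum_{R_\omega}k_l(a-v_{il})x_i^{*v_{il}-1}$, which is positive exactly by \cref{con1}; likewise \cref{con2} makes $\partial^2_j f(x^*)>0$. Hence the (diagonal) Hessian is positive definite, so $f$ has a strict local minimum at $x^*$; restricting to the compatibility line keeps it strict, which also guarantees that $A-B$ (proportional to the directional derivative of the ``imbalance'') changes sign only at $x^*$ locally, so no other equilibrium lies nearby on the line. Standard Lyapunov theory (or LaSalle on the invariant compatibility segment) then yields local asymptotic stability of $x^*$. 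The main obstacle is purely bookkeeping: carefully tracking the separable structure and the cancellation of $w_i,w_j$ and $c_{ij}$ so that the Hessian genuinely diagonalizes and its two diagonal entries reduce precisely to the quantities in \cref{con1}–\cref{con2}; once that is in place the argument is the classical strict-Lyapunov-function conclusion.
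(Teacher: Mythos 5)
Your proposal is correct and follows essentially the same route as the paper's proof: a diagonal Hessian computation showing that conditions \cref{con1}--\cref{con2} make $f$ locally strictly convex with $\nabla f(x^*)=0$, followed by the dissipation identity $\dot f=-(A-B)\ln(A/B)\le 0$ with equality only at the equilibrium. The only blemish is a sign slip in your intermediate display (the two logarithms should be subtracted, not added, so that their combination is $\ln(A/B)$ with the single $c_{ij}$ cancelling rather than $c_{ij}^2$), but your final expression and conclusion are the correct ones.
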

\begin{proof}
Firstly, we verify that $f(x)$ is positive definite.
The Hessian matrix of $f(x)$ is calculated by
\begin{align*}
	\nabla^2 f(x)
	=\text{diag}\left(-\frac{w^{-1}_i\sum_{R_{\omega}} k_l(a-v_{il})x^{v_{il}-1}_{i}}{\sum_{R_{\omega}} k_lx^{v_{il}}_{i}},
	\frac{w^{-1}_j\sum_{L_{\omega}} k_l(b-v_{jl})x^{v_{jl}-1}_{j}}{\sum_{L_{\omega}} k_lx^{v_{jl}}_{j}}\right).
\end{align*}
Based on the continuity of $\nabla^2 f(x)$ with respect to $x$ and conditions \cref{con1},\cref{con2}, there exists a neighborhood of $x^*$, denoted by $D(x^*)$, such that $\forall x\in D(x^*)\bigcap \mathscr{S}^+(x^*)$, there is $\nabla^2 f(x)>0$, which means $f(x)$ is locally strictly convex. This together with the fact $\nabla f(x^*)=\left(-w^{-1}_i\ln \frac{x^{*a}_i}{c_{ij}\sum_{ R_{\omega}}k_lx_i^{*^{v_{il}}}},w^{-1}_j\ln \frac{x^{*b}_j}{c_{ij}\sum_{ L_{\omega}}k_lx_j^{*^{v_{jl}}}}\right)^\top=(0,0)^\top$ yields $f(x^*)=0$ and $f(x)>0$ for any $x\in D(x^*)\bigcap \mathscr{S}^+(x^*)$ but $x\neq x^*$.

Next we prove that $f(x)$ fulfills dissipativeness. The dynamics of the two-species MAS are given by
\begin{align}\label{dynamics-tw}
\left\{
	\begin{array}{ll}
		\dot{x}_i=w_i\sum_{{L}_{\omega}}k_lx^a_i x^{v_{jl}}_j-w_i\sum_{ {R}_{\omega}}k_lx^b_j x^{v_{il}}_i,
		\\
		\dot{x}_j=w_j\sum_{ {L}_{\omega}}k_lx^a_i x^{v_{jl}}_j-w_j\sum_{ {R}_{\omega}}k_lx^b_j x^{v_{il}}_i.
	\end{array}
	\right.	
\end{align}
Then we have
\begin{align}\label{eq:dissipative}
\dot {f}(x)&=\nabla^\top f(x) \dot {x}
\notag
\\
&=-\ln \frac{\sum_{{L}_{\omega}}k_lx^a_i x^{v_{jl}}_j}{\sum_{ {R}_{\omega}}k_lx^b_j x^{v_{il}}_i}
\bigg(
\sum_{{L}_{\omega}}k_lx^a_i x^{v_{jl}}_j-\sum_{ {R}_{\omega}}k_lx^b_j x^{v_{il}}_i
\bigg)
\notag
\\
&\leq0.
\end{align}
Here, the last equality holds if and only if
$$\sum_{{L}_{\omega}}k_lx^a_i x^{v_{jl}}_j-\sum_{ {R}_{\omega}}k_lx^b_j x^{v_{il}}_i=0,$$ which means $x=x^*$.
Therefore, $f(x)$ can be used as a Lyapunov function to capture the local asymptotic stability of $x^*$.
\end{proof}

In the following, we consider a special class of two-species networks, i.e., two-species autocatalytic CRNs. Autocatalytic CRNs is an active area of research on biological systems (e.g., DNA replications \cite{Plasson2011Autocatalyses}, population evolution, etc), life and artificial life \cite{Kauffman1995At,Hordijk2004,Virgo2016Complex} (to describe self-organizing systems), and chemistry.
The notion of autocatalytic reactions is usually used to describe a class of reactions that are catalyzed by the products.
\begin{definition} [a reduced version of autocatalytic CRNs \cite{hoessly2019stationary}]\label{auto}
	An MAS $\mathcal{M}=(\mathcal{S,C,R,K})$ is said to be autocatalytic if it satisfies
	\begin{enumerate} [itemindent=3em]
		\item[\emph{(1)}] all reactions have a net consumption of one $S_i$ and a net production of one $S_j$, i.e., in the form of
		\begin{equation*}
			\xymatrix{S_i+(\alpha_j-1) S_j \ar[r]^-{k_{i,\alpha_j}}  & \alpha_j S_j, ~~~\alpha_j\geq 1},
			\end{equation*}
				and for convenience, denote the collection of the above reactions by $$\mathcal{R}_{i,j}:=\{v_{\cdot l}\longrightarrow v'_{\cdot l}\in \mathcal{R}:v'_{\cdot l}-v_{\cdot l}=e_j-e_i\},$$ where $e_i$ and $e_j$ are unit vectors;
			\item[\emph{(2)}] there must exist a pair of monomolecular reversible reactions;
			\item[\emph{(3)}] if $S_i\longrightarrow S_j \in \mathcal{R}_{i,j}\subset\mathcal{R}$ and $S_l\longrightarrow S_j \in \mathcal{R}_{l,j}\subset\mathcal{R}$, then the reactions in $\mathcal{R}_{i,j}$ and $\mathcal{R}_{l,j}$ contain reactions of the same molecularity such that there exists some $c\in \mathbb{R}_{>0}$ with $$c \cdot (k_{i,1},...,k_{i,n_j})=(k_{l,1},...,k_{l,n_j}),$$
		where $n_j$ represents the highest integer of the reaction $S_i+(n_j-1)S_j\longrightarrow n_j S_j$.
	\end{enumerate}
	\end{definition}
	
Any two-species autocatalytic CRN is a $1$-dimensional network, and is thus locally asymptotically stable at its equilibrium based on \cref{le:tw}.

\begin{corollary}\label{le:auto}
	For any two-species autocatalytic $\mathcal{M}=(\mathcal{S,C,R,K})$ governed by Eq. \cref{dynamics-tw} with $L_{\omega}=\mathcal{R}_{i,j}$, $R_{\omega}=\mathcal{R}_{j,i}$, $w_i=-1, w_j=1$, assume it admits a reaction vector balanced equilibrium $x^*\in \mathbb{R}^2_{>0}$,
	%then we have
	%\begin{enumerate}
	%\item[(1).]
	%$c_{ij}:=\frac{x^*_i}{\sum_{ \mathcal{R}_{j,i}}k_{j,\alpha_i}x^{*^{\alpha_i-1}}_i}=\frac{x^*_j}{\sum_{\mathcal{R}_{i,j}}k_{j,\alpha_j}x^{*^{\alpha_j-1}}_j}$,
	%\item[(2).]
	the function
	\begin{align}\label{eq:Lyauto}
		f(x)=\int^{x_i}_{x^*_i}\ln \frac{x_i}{c_{ij}\sum_{ \mathcal{R}_{j,i}}k_{j,\alpha_i}x^{\alpha_i-1}_i} dx_i
		+
		\int^{x_j}_{x^*_j}\ln \frac{x_j}{c_{ij}\sum_{ \mathcal{R}_{i,j}}k_{i,\alpha_j}x^{\alpha_j-1}_j} dx_j
	\end{align}
	with $c_{ij}=\frac{x^*_i}{\sum_{ \mathcal{R}_{j,i}}k_{j,\alpha_i}x^{*^{\alpha_i-1}}_i}=\frac{x^*_j}{\sum_{\mathcal{R}_{i,j}}k_{i,\alpha_j}x^{*^{\alpha_j-1}}_j}$
	can behave as a Lyapunov function to show 
	$x^*$ is locally asymptotically stable if
	\begin{align}\label{con1-auto}
		\sum_{ v_{\cdot l}\longrightarrow v'_{\cdot l} \in \mathcal{R}_{i,j}} k_{{i,\alpha_j}}(2-\alpha_j)x^{*^{\alpha_j-1}}_{j}>0
	\end{align}
	and
	\begin{align}\label{con2-auto}
		\sum_{ v_{\cdot l}\longrightarrow v'_{\cdot l} \in \mathcal{R}_{j,i}} k_{{j,\alpha_i}}(2-\alpha_i)x^{*^{\alpha_i-1}}_{i}>0
	\end{align}
	%\end{enumerate}
	are ture.
\end{corollary}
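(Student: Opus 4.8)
The plan is to derive \cref{le:auto} as a direct specialization of \cref{le:tw}, so the whole argument reduces to checking that a two-species autocatalytic MAS is an instance of the class treated in \cref{sec:4.1} and then matching the notation. First I would note that, by condition (1) of \cref{auto}, every reaction of such a network has reaction vector $\pm(e_j-e_i)$, so its stoichiometric subspace is spanned by $e_j-e_i$ and the network is $1$-dimensional; choosing $L_\omega=\mathcal{R}_{i,j}$, $R_\omega=\mathcal{R}_{j,i}$ and $\omega^\top=(w_i,w_j)=(-1,1)$ gives exactly the partition used in \cref{sec:4.1}. Moreover, in each reactant complex of a reaction in $\mathcal{R}_{i,j}$ the species $S_i$ occurs with coefficient $1$, and in each reactant complex of a reaction in $\mathcal{R}_{j,i}$ the species $S_j$ occurs with coefficient $1$; hence the constants $a,b$ of \cref{sec:4.1} are both $1$, while the remaining exponents are $v_{il}=\alpha_i-1$ for $l\in\mathcal{R}_{j,i}$ and $v_{jl}=\alpha_j-1$ for $l\in\mathcal{R}_{i,j}$.

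Next I would substitute $a=b=1$, $w_i^{-1}=-1$, $w_j^{-1}=1$ together with these exponents into the Lyapunov function of \cref{le:tw}. The factor $w_i^{-1}=-1$ cancels the leading minus sign, $t^a$ and $t^b$ collapse to $t$, and the sums $\sum_{R_\omega}k_lt^{v_{il}}$ and $\sum_{L_\omega}k_lt^{v_{jl}}$ become $\sum_{\mathcal{R}_{j,i}}k_{j,\alpha_i}t^{\alpha_i-1}$ and $\sum_{\mathcal{R}_{i,j}}k_{i,\alpha_j}t^{\alpha_j-1}$ respectively; this reproduces \cref{eq:Lyauto} verbatim, and the normalization constant $c_{ij}$ of \cref{le:tw} reduces to the one stated in the corollary. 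Its well-definedness (equality of the two quotients) is precisely the reaction-vector-balanced identity $x_i^*\sum_{\mathcal{R}_{i,j}}k_{i,\alpha_j}x_j^{*^{\alpha_j-1}}=x_j^*\sum_{\mathcal{R}_{j,i}}k_{j,\alpha_i}x_i^{*^{\alpha_i-1}}$, which holds by the standing hypothesis that $x^*$ is reaction vector balanced, so nothing extra is needed here.

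Finally I would translate the curvature conditions \cref{con1} and \cref{con2} of \cref{le:tw} under the same substitution: \cref{con1} becomes $-\sum_{\mathcal{R}_{j,i}}k_{j,\alpha_i}(2-\alpha_i)x_i^{*^{\alpha_i-2}}<0$ and \cref{con2} becomes $\sum_{\mathcal{R}_{i,j}}k_{i,\alpha_j}(2-\alpha_j)x_j^{*^{\alpha_j-2}}>0$. Multiplying the first by $-x_i^*<0$ and the second by $x_j^*>0$ — both admissible since $x^*\in\mathbb{R}^2_{>0}$ — yields exactly \cref{con2-auto} and \cref{con1-auto}; hence the hypotheses of the corollary are equivalent to those of \cref{le:tw} and the conclusion follows immediately. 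I do not expect a genuine obstacle here: the only thing requiring care is the bookkeeping of the two index families $\mathcal{R}_{i,j}$ and $\mathcal{R}_{j,i}$ (and the swapped roles of $S_i$ and $S_j$ between a reaction's consumed and produced species), together with the harmless shift between the exponents $\alpha-2$ and $\alpha-1$ produced by multiplying through by a positive equilibrium coordinate.
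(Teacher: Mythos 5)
Your proposal is correct and follows exactly the paper's route: the paper's proof of \cref{le:auto} is the one-line remark that the result follows by applying \cref{le:tw} to the two-species autocatalytic MAS, and your argument simply fills in the bookkeeping (identifying $a=b=1$, $v_{il}=\alpha_i-1$, $v_{jl}=\alpha_j-1$, $\omega=(-1,1)$, and rescaling \cref{con1}--\cref{con2} by $\mp x^*_i,\, x^*_j$ to obtain \cref{con2-auto} and \cref{con1-auto}). All substitutions check out, including the observation that well-definedness of $c_{ij}$ is precisely the reaction-vector-balance identity.
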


\begin{proof}
By applying \cref{le:tw} to the two-species autocatalytic MAS, the result is obvious.
\end{proof}

\begin{example}
	Given a two-species autocatalytic network with deficiency $4$, which has the reaction route
	\begin{align}
		&\xymatrix{S_1 \ar @{ -^{>}}^{k_{1,1}}  @< 1pt> [r]& S_2 \ar  @{ -^{>}}^{k_{2,1}}  @< 1pt> [l]},\\\notag
		&\xymatrix{2S_1+S_2\ar[r]^-{k_{2,3}}&3S_1},\\\notag
		&\xymatrix{S_1+2S_2 \ar[r]^-{ k_{1,3}} & 3S_2},\\\notag
		%&\xymatrix{&2S_2\\
		%	S_1+S_2 ~~\ar[ur]^-{k_{1,2}}~~\ar[dr]^-{k_{2,2}}&\\
		%&2S_1}
		&\xymatrix{2S_1&\ar[l]_-{k_{2,2}}S_1+S_2 \ar[r]^-{k_{1,2}}&2S_2}.
	\end{align}
	The type of autocatalytic CRN can be used to model the foraging system in ants for the research on decision-making problems \cite{khaluf2017the}. Here, we focus on the properties of its equilibria.
	By choosing $k_{1,3}=k_{2,3}= k_{1,2}=1, k_{2,2}=3, k_{2,1}=2, k_{1,1}=4$, the dynamical equations are as follows
	\begin{align*}
		\left\{
		\begin{array}{ll}
			\dot{x}_1=-(4x_1+x_1  x^2_2+x_1 x_2)+2x_2+x_2 x^2_1+3x_1 x_2,\\
			\dot{x}_2=4x_1+x_1  x^2_2+x_1 x_2-(2x_2+x_2 x^2_1+3x_1 x_2).
		\end{array}
		\right.
	\end{align*}
	We can calculate an equilibrium $(1,1)^\top$ within the positive compatibility class $\{x_1+x_2=2\}$. According to \cref{le:auto}, we can find a candidate Lyapunov function like Eq. \cref{eq:Lyauto} for the system
	\begin{align*}
		f(x)=\int^{x_1}_{1}\ln \frac{6t}{t^2+3t+2} dt+\int^{x_2}_{1}\ln \frac{6t}{t^2+t+4} dt.
	\end{align*}
	Then it is easy to validate that
	$(1,1)^\top$ satisfies Eqs. \cref{con1} and \cref{con2}, that is
	$$2-x^{*^2}_1|_{x^{*}_1=1}=1>0,$$ and $$4-x^{*^2}_2|_{x^{*}_2=1}=3>0.$$
	Therefore, $(1,1)^\top$ is locally asymptotically stable.
\end{example}

\begin{remark}\label{re:stability}
	If a two-species autocatalytic CRN is at-most-biomolecular (the sum of stoichiometric coefficients of any complex in the network is at most 2 \cite{gross2020joining}), i.e., every $\alpha_i \leq2$ and $\alpha_j\leq2$, then this system must be locally asymptotically stable since Eqs. \cref{con1} and \cref{con2} are satisfied naturally.
\end{remark}
\subsection{CRNs decomposed into a complex balanced subnetwork and some two-species subnetworks}
When the decomposition contains two-species networks instead of general $1$-dimensional networks, we have the following result.

\begin{theorem}\label{thm:com-tw}
For an $\mathcal{M}=(\mathcal{S,C,R,K})$ described by \cref{eq:mas0} with an equilibrium $x^*\in\mathbb{R}^n_{>0}$, assume it can be broken into a complex balanced $\mathcal{M}^{(0)}=(\mathcal{S}^{(0)},\mathcal{C}^{(0)},\mathcal{R}^{(0)},\mathcal{K}^{(0)})$ and $\ell$ reaction vector balanced two-species $\mathcal{M}^{(p)}=(\mathcal{S}^{(p)},\mathcal{C}^{(p)},\mathcal{R}^{(p)},\mathcal{K}^{(p)}),$ $p=1,...,\ell$, and moreover, $\forall p,~\mathcal{S}^{(p)}=\{S_i,S_j\}\subseteq\mathcal{S}$ and $ E_p:= \mathcal{S}^{(0)}\bigcap\mathcal{S}^{(p)}\neq\emptyset$. Then the local asymptotic stability of $x^*$ can be achieved if the following conditions are true,
	\begin{itemize}[itemindent=3em]
			\item[\emph{(1)}] let $E=\bigcup^{\ell}_{p=1}E_p,$ for every $S_i\in E$, the reaction in
			$R_{w_p}$ in $\mathcal{M}^{(p)}$ satisfies $v_{il}^{(p)}-a^{(p)}=w_{ip}$,
		
		\item[\emph{(2)}] for any species $S_j\in \mathcal{S}^{(p)}\bigcap \mathcal{S}^{(q)}$, $p,q \in \{1,\cdots,\ell\}$, the reactions in $\mathcal{M}^{(p)}$ and $\mathcal{M}^{(q)}$ contain the same
stoichiometric coefficient of $S_j$ such that there is a constant $c\in\mathbb{R}_{>0}$ with $(k^{(p)}_1,\cdots,k^{(p)}_{r_p})=c\cdot(k^{(q)}_1,\cdots,k^{(q)}_{r_q}), r_p=r_q$. 
		
		\item[\emph{(3)}] $\forall S_j\in \mathcal{S}\backslash \mathcal{S}^{(0)}$ and $\forall p$, there holds
		\begin{align}\label{eq:com-tw-con}
		w^{-1}_{jp}\sum_{L_{\omega_p}} k^{(p)}_l(b^{(p)}-v_{jl}^{(p)})x^{*^{v_{jl}^{(p)}-1}}_{j}>0.
		\end{align}
	\end{itemize}
\end{theorem}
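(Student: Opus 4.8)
The plan is to build a single Lyapunov function for $\mathcal{M}$ by welding the pseudo--Helmholtz function of the complex balanced piece to the two--species functions of \cref{le:tw}, using conditions (1)--(2) precisely to make the weld consistent. Write $E=\bigcup_{p=1}^{\ell}E_p$. For each $p$ let $S_{i_p}\in E_p$ be the species $\mathcal{M}^{(p)}$ shares with $\mathcal{M}^{(0)}$ — this is the ``$S_i$'' of \cref{le:tw}, so $a^{(p)}$, $R_{\omega_p}$ and the $S_{i_p}$--component $w$ of the reaction vector all refer to it — and let $S_{j_p}\in\mathcal{S}\setminus\mathcal{S}^{(0)}$ be the other species (the ``$S_j$'', so \cref{eq:com-tw-con} is exactly hypothesis \cref{con2} of \cref{le:tw} for it). As candidate I would take
\[
V(x)=V_0\big(x|_{\mathcal{S}^{(0)}}\big)+\sum_{S_j\in\mathcal{S}\setminus\mathcal{S}^{(0)}}u_j(x_j),
\]
where $V_0$ is the classical pseudo--Helmholtz function \cref{eq:Helmholtz} of $\mathcal{M}^{(0)}$ and $u_j$ is the ``$x_j$--integral'' part of the function $f_p$ from \cref{le:tw} for any $p$ with $S_{j_p}=S_j$. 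Condition (2) is what makes this unambiguous: when $S_j$ belongs to two two--species subnetworks, the matching molecularities and the proportionality of the rate constants force the two candidate $x_j$--integrands to coincide, since that integrand is invariant under a common rescaling of the $k_l$'s together with the compensating change in $c_{ij}$. Because $\mathcal{S}=\mathcal{S}^{(0)}\cup\{S_{j_p}:p\}$, $V$ involves every species.

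The key step is to verify that $\nabla V$, restricted to the species of any one subnetwork, reproduces that subnetwork's own Lyapunov gradient. On $\mathcal{S}^{(0)}$ this is immediate since the $u_j$'s only involve private variables, so $\partial V/\partial x_S=\partial V_0/\partial x_S=\ln(x_S/x_S^{*})$; on a private species it holds by construction. The load--bearing case is a shared species $S_{i_p}$: condition (1) says every reaction in $R_{\omega_p}$ has $v^{(p)}_{i_p l}=a^{(p)}+w$, so $\sum_{R_{\omega_p}}k^{(p)}_l t^{v^{(p)}_{i_p l}}=t^{a^{(p)}+w}\sum_{R_{\omega_p}}k^{(p)}_l$, and feeding the definition of $c^{(p)}_{ij}$ into the $x_{i_p}$--part of $f_p$ collapses it to $\ln(x_{i_p}/x_{i_p}^{*})$ — i.e. that part of $f_p$ is literally the pseudo--Helmholtz term for $S_{i_p}$ already present in $V_0$. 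Hence $\nabla V$ agrees with $\nabla f_p$ on $\mathcal{S}^{(p)}$ for every $p\ge1$ and with $\nabla V_0$ on $\mathcal{S}^{(0)}$, with no double counting.

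With that identity the two Lyapunov requirements drop out. Positive definiteness: $V_0$ is strictly convex with minimum $0$ at $x^{*}|_{\mathcal{S}^{(0)}}$; each $u_j$ has $u_j'(x_j^{*})=0$ and, by condition (3) (which is \cref{con2}), $u_j''(x_j^{*})>0$, so $u_j$ is locally strictly convex at $x_j^{*}$; as the pieces depend on disjoint groups of coordinates covering $\mathcal{S}$, $V$ has a strict local minimum $0$ at $x^{*}$. Dissipativeness: the field of \cref{eq:mas0} splits over the decomposition as $\dot x=\sum_{p=0}^{\ell}\dot x^{(p)}$ with $\dot x^{(p)}$ supported on $\mathcal{S}^{(p)}$, so by the gradient identity $\dot V=\nabla V_0^{\top}\dot x^{(0)}+\sum_{p=1}^{\ell}\nabla f_p^{\top}\dot x^{(p)}\le0$, the first term being $\le0$ by complex balancedness of $\mathcal{M}^{(0)}$ (Horn--Jackson) and each other term by the computation \cref{eq:dissipative} of \cref{le:tw}, which needs only reaction--vector balancedness, not \cref{con1}--\cref{con2}. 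Finally $\dot V=0$ forces every summand to vanish: $\nabla f_p^{\top}\dot x^{(p)}=0$ gives $\dot x^{(p)}=0$ via \cref{dynamics-tw}, and then $\nabla V_0^{\top}\dot x^{(0)}=0$ gives $\dot x^{(0)}=0$, so the largest invariant set in $\{\dot V=0\}$ near $x^{*}$ consists of equilibria and — $V$ being a strict Lyapunov function there — equals $\{x^{*}\}$; LaSalle's invariance principle then yields local asymptotic stability at $x^{*}$.

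I expect the main obstacle to be precisely this gradient--matching and its bookkeeping: showing that conditions (1) and (2) together make $V$ single--valued and make $\nabla V$ simultaneously consistent with $\nabla V_0$ and with every $\nabla f_p$, in particular the algebraic collapse of the $x_{i_p}$--integrand under (1) and the treatment of private species lying in several two--species subnetworks. A lesser technical point is the closing LaSalle argument — ruling out a nontrivial invariant set inside $\{\dot V=0\}$ near $x^{*}$ — which I would run exactly as in the proof of \cref{thm:com-1}.
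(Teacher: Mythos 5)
Your proposal is correct and follows essentially the same route as the paper's proof: the Lyapunov function is the pseudo-Helmholtz function of $\mathcal{M}^{(0)}$ plus the $x_j$-integral parts of the \cref{le:tw} functions for the private species, with condition (1) collapsing the shared-species integrand to the pseudo-Helmholtz term, condition (3) giving local strict convexity, and dissipativeness obtained termwise from complex balancing and the computation \cref{eq:dissipative}. Your choice to index the private-species sum over $S_j\in\mathcal{S}\setminus\mathcal{S}^{(0)}$, invoking condition (2) to make each $u_j$ well-defined, is in fact slightly more careful bookkeeping than the paper's sum over $p$.
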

\begin{proof}
The detailed proof can be found in Appendix 2. \Luref{Thm16proof}.
\end{proof}

\begin{remark}\label{re:com-1}
Unlike \cref{thm:com-1}, the above result suggests that the decomposed two-species CRNs may share more species but not just the species included in $\mathcal{S}^{(0)}$.
\end{remark}

By combing \cref{thm:com-tw} with \cref{thm:com-1}, we can easily obtain another decomposition pattern that may be applied to more CRNs.

\begin{corollary}\label{exthm:com-1}
	Given a $\mathcal{M}=(\mathcal{S,C,R,K})$ governed by \cref{eq:mas0} with an equilibrium $x^*\in\mathbb{R}^n_{>0}$, assume it can be decomposed into a complex balanced $\mathcal{M}^{(0)}$ and $\ell$ 1-dimensional $\mathcal{M}^{(p)}$'s satisfying reaction vector balancing. Then the system is locally asymptotically stable at $x^*$ if
		\begin{itemize}[itemindent=3em]
	    \item[\emph{(1)}] $\forall p,q\in\{1,\cdots,\ell\}$,  when $\mathcal{M}^{(p)}, \mathcal{M}^{(q)}$ are both two-species CRNs and all their reactions satisfy condition $(2)$ in \cref{thm:com-tw}; when $\mathcal{M}^{(p)}, \mathcal{M}^{(q)}$ are two-species CRNs but do not satisfy condition $(2)$ in \cref{thm:com-tw}, or they are not two species CRNs, there is $\mathcal{S}^{(p)}\bigcap \mathcal{S}^{(q)}\in \bigcup^{\ell}_{p=1}E_p$ or $\emptyset$ with $E_p:= \mathcal{S}^{(0)}\bigcap\mathcal{S}^{(p)}\neq\emptyset$;
		\item[\emph{(2)}] $\forall p=1,\cdots,\ell$, for those $\mathcal{M}^{(p)}$s that are two-species CRNs and satisfy condition $(2)$ in \cref{thm:com-tw}, conditions $(1)$ and $(3)$ in \cref{thm:com-tw} is satisfied,  and for others conditions $(1)$ and $(2)$ in \cref{thm:com-1} are satisfied.	
	\end{itemize}
\end{corollary}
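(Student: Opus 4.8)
The plan is to exhibit an explicit Lyapunov function for $\mathcal{M}$ assembled block by block from the decomposition, and then to reuse, almost verbatim, the two estimates already carried out in \Luref{Thm3.3proof} and \Luref{Thm16proof}. First I would split the index set $\{1,\dots,\ell\}$ into $P_{\mathrm{tw}}$, consisting of those $p$ for which $\mathcal{M}^{(p)}$ is a two-species CRN that, together with every two-species subnetwork with which it shares a species, obeys condition $(2)$ of \cref{thm:com-tw}, and $P_1:=\{1,\dots,\ell\}\setminus P_{\mathrm{tw}}$, which by condition $(1)$ of the corollary consists of subnetworks whose pairwise intersections lie inside $\bigcup_{p=1}^{\ell}E_p$ or are empty. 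Using \cref{decomposition} and Property~\ref{pro:eq1} to fix the equilibria $x^{*^{(0)}}$ and $x^{*^{(p)}}$, I would set $V(x)=V_0(x^{(0)})+\sum_{p\in P_{\mathrm{tw}}} f_p(x^{(p)})+\sum_{p\in P_1} V_p(x^{(p)})$, where $V_0$ is the pseudo-Helmholtz function \cref{eq:Helmholtz} of the complex balanced $\mathcal{M}^{(0)}$, $f_p$ is the Lyapunov function of \cref{le:tw} for the two-species $\mathcal{M}^{(p)}$, and $V_p$ is the standard $1$-dimensional Lyapunov function \cref{eq:sub1Lya}. Each summand depends only on the coordinates of its own species block, and the shared species are exactly the coordinates occurring in more than one block.

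Second, positive definiteness of $V$ at $x^*$ on the stoichiometric compatibility class of $x^*$ is inherited termwise: $V_0$ is positive definite by Horn--Jackson; each $f_p$, $p\in P_{\mathrm{tw}}$, is locally strictly convex with a critical point at $x^{*^{(p)}}$ by the Hessian computation in the proof of \cref{le:tw}, its hypotheses \cref{con1}--\cref{con2} being precisely what conditions $(1)$ and $(3)$ of \cref{thm:com-tw} supply; and each $V_p$, $p\in P_1$, is positive definite by the $1$-dimensional theory once condition $(2)$ of \cref{thm:com-1} holds. Since summing positive definite functions of overlapping coordinate blocks preserves positive definiteness on the ambient invariant subspace, $V$ is a legitimate candidate; I would make this step quantitative exactly as in the two appendix proofs.

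Third --- and this is the heart --- I would verify $\dot V(x)\le 0$ with equality only at $x^*$. Writing $\dot x=\sum_{p=0}^{\ell}\Gamma^{(p)}\Xi^{(p)}(x)$ according to the decomposition and applying the chain rule, $\dot V$ is the sum of the complex-balanced contribution $\nabla V_0^\top\Gamma^{(0)}\Xi^{(0)}$ and of the blockwise terms $\nabla f_p^\top\dot x^{(p)}$ and $\nabla V_p^\top\dot x^{(p)}$; the subtlety is that $\dot x^{(p)}$ collects fluxes from \emph{every} subnetwork touching $\mathcal{S}^{(p)}$, so these terms do not separate a priori. The argument, transcribed from \cref{thm:com-1} and \cref{thm:com-tw}, is: (a) $\nabla V_0=\mathrm{Ln}(x^{(0)})-\mathrm{Ln}(x^{*^{(0)}})$, so its pairing with $\Gamma^{(0)}\Xi^{(0)}$ reduces to the Horn--Jackson expression, which is $\le 0$; (b) for a species $S_i\in E=\bigcup_p E_p$, condition $(1)$ of \cref{thm:com-1} (resp. the exponent identity $v^{(p)}_{il}-a^{(p)}=w_{ip}$ of \cref{thm:com-tw}) aligns the exponents so that the mixed $S_i$-contributions reorganize into the single-variable logarithmic factors $\ln(\sum_{L}k_l x^a x^{v_{jl}}/\sum_{R}k_l x^b x^{v_{il}})$ of \cref{eq:dissipative}; (c) condition $(2)$ of \cref{thm:com-tw} --- proportional rate-constant vectors on a species shared by two two-species blocks --- lets those two blocks' fluxes through that species be merged with one scalar factor, preserving the dissipative sign; and (d) the species of $\mathcal{S}\setminus\mathcal{S}^{(0)}$ lying in a single block are controlled by \cref{eq:com-tw-con} (equivalently \cref{con2}) or by condition $(2)$ of \cref{thm:com-1}. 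Collecting, $\dot V$ becomes a sum of nonpositive terms, each vanishing only when its subnetwork flux vanishes, hence only at $x^*$; LaSalle's invariance principle then gives local asymptotic stability.

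I expect the main obstacle to be the \emph{bookkeeping of which taming mechanism acts on which species}: I must confirm that the Horn--Jackson cancellation used for $\mathcal{M}^{(0)}$-shared species in \cref{thm:com-1} and the proportional-rate merging of \cref{thm:com-tw} are never simultaneously required on the same coordinate. This is exactly what condition $(1)$ of the corollary guarantees --- any two subnetworks that are not both two-species-plus-condition-$(2)$ meet only inside $\bigcup_p E_p$ --- so once the partition $\{1,\dots,\ell\}=P_{\mathrm{tw}}\cup P_1$ is chosen consistently, the two appendix estimates fit together without new analytic input. I would therefore organize the write-up so that the $p\in P_1$ part of $\dot V$ is quoted from \Luref{Thm3.3proof}, the $p\in P_{\mathrm{tw}}$ part from \Luref{Thm16proof}, and the only fresh work is checking that the shared-species terms straddling the two groups cancel or are absent, which is immediate from condition $(1)$.
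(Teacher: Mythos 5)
Your overall strategy---partition $\{1,\dots,\ell\}$ into the two-species-plus-condition-$(2)$ blocks and the rest, and splice together the two appendix arguments---is exactly what the paper intends (it offers no written proof beyond ``combine \cref{thm:com-1} with \cref{thm:com-tw}''). But there is one concrete defect in your construction: you define $V$ as the \emph{plain sum} $V_0+\sum_{p\in P_{\mathrm{tw}}}f_p+\sum_{p\in P_1}V_p$ of the \emph{full} block Lyapunov functions, and this double-counts every shared species. For $S_i\in E_p$ the function $f_p$ of \cref{le:tw} already contains an integral term in $x_i$ (which, under condition $(1)$ of \cref{thm:com-tw}, collapses to the pseudo-Helmholtz term $x_i^*-x_i+x_i\ln(x_i/x_i^*)$), while $V_0$ contains the same term again; likewise a species $S_j\in\mathcal{S}^{(p)}\cap\mathcal{S}^{(q)}\setminus\mathcal{S}^{(0)}$ shared by two blocks of $P_{\mathrm{tw}}$ gets its integral term twice. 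Positive definiteness survives the duplication, but dissipativeness does not: the termwise sign argument requires $\partial V/\partial x_i=\partial f^{(p)}/\partial x_i$ on each block so that $\nabla V^\top\Gamma^{(p)}\Xi^{(p)}$ reduces to the form $-\ln(A/B)(A-B)$ of \cref{eq:dissipative} (and to the Horn--Jackson expression for $p=0$); with a factor $2\ln(x_i/x_i^*)$ on the shared coordinates neither reduction goes through.

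The fix is exactly what the appendix proofs do and what you should make explicit: each species contributes to $V$ \emph{once}. Take $f^{(0)}$ over all of $\mathcal{S}^{(0)}$, and for each $p$ add only the integral terms over $\mathcal{S}^{(p)}\setminus E_p$ (for $p\in P_1$, the reduced integral $\int_0^{\gamma(\tilde x^{(p)})}\ln\tilde u_p\,dt$ of \Luref{Thm3.3proof}; for $p\in P_{\mathrm{tw}}$, the single-variable integral of \Luref{Thm16proof}), counting a species shared by two blocks of $P_{\mathrm{tw}}$ only once --- condition $(2)$ of \cref{thm:com-tw} guarantees the two candidate integrands coincide, since the proportionality constant cancels inside $c^{(p)}_{ij}\sum_{L_{\omega_p}}k^{(p)}_l t^{v^{(p)}_{jl}}$. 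Conditions $(1)$ of \cref{thm:com-tw} and $(1)$ of \cref{thm:com-1} then ensure that the omitted $E_p$-terms are exactly reproduced by the gradient of $f^{(0)}$, so each $\nabla V^\top\Gamma^{(p)}\Xi^{(p)}$ is the genuine $\dot f^{(p)}\le 0$ of the corresponding theorem. With that correction your bookkeeping in the last paragraph (the two taming mechanisms never act on the same coordinate, by condition $(1)$ of the corollary) is the right closing observation, and the rest of your argument matches the paper's.
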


We revisit the network \cref{eg:motivation} in \cref{examp2} to illustrate \cref{thm:com-tw} and \cref{exthm:com-1}.

\begin{example}\label{eg:7}
	%Consider the following $\mathcal{M}$ (example 5.5, in\cite{hoessly2019stationary}) of $5$-dimenaional and with deficiency $3$,
	%\begin{align}
	%&\xymatrix{S_{1} \ar @{ -^{>}}^{}  @< 1pt> [r]
	%& S_{2} \ar  @{ -^{>}}^{}  @< 1pt> [l]
	%\ar @{ -^{>}}^{}  @< 1pt> [r]
	%&S_3 \ar  @{ -^{>}}^{}  @< 1pt> [l]
	%\ar @{ -^{>}}^{}  @< 1pt> [r]
	%	&S_4 \ar  @{ -^{>}}^{}  @< 1pt> [l]},
	%\notag
	%\\
	%&\xymatrix{S_1 +S_2\ar[r]^-{}& 2S_2},~~
	%\xymatrix{S_2 +S_3\ar[r]^-{}& 2S_2},\\ \notag
	%&\xymatrix{
	%2S_4\ar @{ -^{>}}^{}  @< 1pt> [r]& S_3+S_4 \ar  @{ -^{>}}^{}  @< 1pt> [l] },~~
	%\xymatrix{3S_3\ar @{ -^{>}}^{}  @< 1pt> [r]& 3S_5 \ar  @{ -^{>}}^{}  @< 1pt> [l]},\\\notag
	%&\qquad \qquad \xymatrix{2S_1\ar[r]^-{}&2S_3\ar[ld]^-{}\\
	%\ar[u]^-{} S_1+S_3 &}.	
	%\end{align}
	Assume this network is constrained by $x_1+x_3+x_5=3, x_1+x_2=2$ and $x_3+x_4=2$. Under the rate constants given below it has an equilibrium $x^*=(1,1,1,1,1)^\top$.
	For convenience of applying \cref{exthm:com-1}, we consider four subnetworks
	
	\emph{(1)} a complex balanced $\mathcal{M}^{(0)}$ on $\{S_1,S_3,S_5\}$ with an equilibrium $(x^*_1,x^*_3,x^*_5)^\top=(1,1,1)^\top$, i.e.,
	\begin{align*}
		\mathcal{N}_0:~~\xymatrix{3S_3\ar @{ -^{>}}^{1}  @< 1pt> [r]& 3S_5 \ar  @{ -^{>}}^{1}  @< 1pt> [l]},
		\xymatrix{2S_1\ar[r]^-{1}&2S_3\ar[ld]^-{1}\\
			\ar[u]^-{1} S_1+S_3 &};
	\end{align*}
	
	\emph{(2)} two-species autocatalytic $\mathcal{M}^{(1)}$ and $\mathcal{M}^{(2)}$ on $\{S_1,S_2\}$ and $\{S_2,S_3\}$, which own respective reaction vector equilibrium $(x^*_1,x^*_2)^\top=(1,1)^\top$ and $(x^*_2,x^*_3)^\top=(1,1)^\top$,
	\begin{align*}
		&\mathcal{N}_1:~~\xymatrix{S_{1} \ar @{ -^{>}}^{1}  @< 1pt> [r]
			& S_{2}\ar  @{ -^{>}}^{2}  @< 1pt> [l]},~~
		\xymatrix{S_1 +S_2\ar[r]^-{1}& 2S_2},
	\end{align*}
	and
	\begin{align*}
		&\mathcal{N}_{2}:~~\xymatrix{ S_{2}
			\ar @{ -^{>}}^{2}  @< 1pt> [r]
			&S_3 \ar  @{ -^{>}}^{1}  @< 1pt> [l]},~~
		\xymatrix{S_2 +S_3\ar[r]^-{1}& 2S_2};
	\end{align*}
	
	\emph{(3)} one $1$-dimensional two-species $\mathcal{M}^{(3)}$ on $\{S_3,S_4\}$, which has a reaction vector balanced equilibrium $(x^*_3,x^*_4)^\top=(1,1)^\top$,
	\begin{align*}
		&\mathcal{N}_3:~~\xymatrix{S_3\ar @{ -^{>}}^{2}  @< 1pt> [r]& S_4 \ar  @{ -^{>}}^{3}  @< 1pt> [l]},~~
		\xymatrix{2S_4\ar @{ -^{>}}^-{1}  @< 1pt> [r]& S_3+S_4 \ar  @{ -^{>}}^-{2}  @< 1pt> [l]}.
	\end{align*}
	
	From \cref{decomposition} and \cref{pro:eq1}, the network is decomposable, and $x^*=(1,1,1,1,1)^\top$ is a generalized equilibrium in $\mathcal{M}$. Further, we have $E_1=\{S_1\}$, $E_2=\{S_3\}$, $E_3=\{S_3\}$,
	$\mathcal{S}^{(1)}\bigcap \mathcal{S}^{(3)}=\emptyset$,
	$\mathcal{S}^{(2)}\bigcap \mathcal{S}^{(3)}=\{S_3\}\in E_2$ and $\mathcal{S}^{(1)}\bigcap \mathcal{S}^{(2)}=\{S_2\}\notin\bigcup^{3}_{i=1}{E}_i$. Finally, we check the other needed conditions in \cref{exthm:com-1}.
	
	\begin{enumerate}[itemindent=3em]
		    \item [$\bullet$] condition $(1)$ in \cref{exthm:com-1} holds since all the reactions in $\mathcal{N}_1$ and $\mathcal{N}_2$ satisfy condition $(2)$ in \cref{thm:com-tw}. Although
		    $\mathcal{N}_3$ is a two-species CRN, it does not meet condition $(2)$ in \cref{thm:com-tw}.
	
		\item [$\bullet$] condition $(2)$ in \cref{exthm:com-1} holds: condition $(1)$ in \cref{thm:com-tw} is true in $\mathcal{M}^{(1)}$, since $\omega_1=(-1,1)$, for $S_2\stackrel{2}{\longrightarrow}S_1$ in $R_{\omega_1}$, $v_{12}^{(1)}-a^{(1)}=0-1=\omega_{11}$, and in $\mathcal{M}^{(2)}$ similar analysis can be made to prove true;
		 condition $(3)$ in \cref{thm:com-tw} holds since $\mathcal{M}^{(1)}$ and $\mathcal{M}^{(2)}$ are two-species autocatalytic MASs and fulfill Eq. \cref{eq:com-tw-con}. Besides, condition $(1)$ in \cref{thm:com-1} is true since in $\mathcal{M}^{(3)}$, $S_3\in E_3$, for $S_3\stackrel{2}{\longrightarrow}S_4$,
		$S_3+S_4\stackrel{2}{\longrightarrow}2S_4$, there are $S_4\stackrel{3}{\longrightarrow}S_3$ and $2S_4\stackrel{1}{\longrightarrow}  S_3+S_4$ ; condition $(2)$ in \cref{thm:com-1} holds in $\mathcal{M}^{(3)}$: $L_{\omega_3}=\{l:v'^{(3)}_{\cdot l}- v^{(3)}_{\cdot l}=(1, -1)\}$ and $R_{\omega_p}=\{l:v'^{(3)}_{\cdot l}- v^{(3)}_{\cdot l}=(-1,1)\}$, so it is not difficult to compute that
		$$\tilde{u}_3(x_4)=\frac{2+2x_4}{3x_4+x^2_4},$$
		and
		$$w_3 \nabla \tilde{u}_3|_{x^*_4=1}=\frac{2x^{*^2}_4+4x^*_4+6}{(3x^*_4+x^{^*2}_4)^2}=\frac{3}{4}>0,$$ which means that $\mathcal{M}^{(3)}$ gratifies Eq. \cref{thm:con-com-1}, too.
	\end{enumerate}
	As a result, the equilibrium $x^*$ of $\mathcal{M}$ is locally asymptotically stable, and the Lyapunov function is given by
	\begin{align}
		f(x)=\sum_{i=1,3,5}(1-x_i+x_i\ln{x_i})
		+\int^{x_2}_{1}\ln \frac{2t}{t+1}dt
		+\int^{x_4}_{1}\ln \frac{3t+t^2}{2+2t}dt.
	\end{align}
\end{example}

\section{Applications: autocatalytic CRNs decomposed for stability}\label{sec5}
In this section, the decomposition strategy is applied to autocatalytic networks for stability analysis.

As stated in \cref{sec:4.1}, autocatalytic CRNs arise abundantly in the biochemical systems, which is deemed as the fundamental to the life process (e.g, self-replications of RNA molecules \cite{hordijk2010autocatalytic}, metabolic pathways). A typical example is an autocatalytic metabolism, such as the Calvin-Benson-Bassham cycle and glyoxylate cycle in central carbon metabolism \cite{barenholz2017design}. Thus the study of stability analysis for autocatalytic CRNs is helpful to understand and engineer the metabolic networks. The previous results in \cref{sec4} provide a feasible solution to address the issue of stability of autocatalytic CRNs.

\begin{property}\label{pro:autoeq}
Assume an autocatalytic $\mathcal{M}=(\mathcal{S,C,R,K})$ admitting an equilibrium $x^*\in \mathbb{R}^n_{>0}$ can be decomposed into several two-species $\mathcal{M}^{(p)}$'s $(p=1,\cdots,\ell)$ according to every pair $(\mathcal{R}_{i,j},\mathcal{R}_{j,i})$. Then $x^*$ is reaction vector balanced in $\mathcal{M}$ if and only if each $x^{*{(p)}}=(x^*_i,x^*_j)\in \mathbb{R}^2_{>0}$, $S_i,S_j\in\mathcal{S}$ is reaction vector balanced in $\mathcal{M}^{(p)}$ .
\end{property}

\begin{proof}
The detailed proof can be found in Appendix 2. \Luref{pro2proof}.
\end{proof}

%\begin{remark}
%Notice that the condition $(3)$ in \cref{auto} guarantees the shared species in subnetworks obtained by decomposition have the same equilibrium values in different reactions pairs, to ensure that the compound of the equilibrium points of the subnetworks becomes the equilibrium of the original one.
%\end{remark}

We thus can present a sufficient condition to suggest the stability of an autocatalytic network through the decomposition strategy.

\begin{theorem}\label{thm:auto}
Given an autocatalytic $\mathcal{M}=(\mathcal{S,C,R,K})$ with an equilibrium $x^*\in\mathbb{R}^n_{>0}$, suppose it can be decomposed into a few two-species autocatalytics $\mathcal{M}^{(p)}$'s $(p=1,\cdots,\ell)$ associated with $(\mathcal{R}_{i,j},\mathcal{R}_{j,i})$. Then $x^*$ is locally asymptotically stable if every $\mathcal{M}^{(p)}$ is reaction vector balanced and gratifies Eqs. \cref{con1} and \cref{con2}.
\end{theorem}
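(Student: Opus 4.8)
The plan is to repeat, for the whole autocatalytic network, the positive‑definiteness/dissipativeness argument that \cref{le:tw} and \cref{le:auto} carry out on a single two‑species piece — essentially the proof of \cref{thm:com-tw} with the complex‑balanced block (and its pseudo‑Helmholtz term) removed — by gluing the two‑species Lyapunov functions \cref{eq:Lyauto} into one function of all species and checking it on the full mass‑action dynamics. The first step is to exploit the autocatalytic structure. Since the decomposition is taken pairwise along $(\mathcal{R}_{i,j},\mathcal{R}_{j,i})$, every reaction of $\mathcal{M}$ sits in exactly one $\mathcal{R}_{i,j}$, and rule (3) of \cref{auto} makes the rate vectors of all reaction sets $\mathcal{R}_{i,k}$ producing a fixed species $S_k$ (over the feeding species $S_i$) positive scalar multiples $c_{ik}$ of one common reference; hence the total production rate of $S_k$ factors as $P_k(x)=\big(\sum_i c_{ik}x_i\big)Q_k(x_k)$ and its consumption rate as $C_k(x)=x_k\sum_l c_{kl}Q_l(x_l)$, where $Q_k(x_k)=\sum_m k^{\mathrm{ref}}_{k,m}x_k^{m-1}$ depends on $x_k$ alone, and every species is both produced and consumed since $\mathcal{M}$ has a positive equilibrium. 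Setting $\gamma_k:=x^*_k/Q_k(x^*_k)$, the candidate is
\[ f(x)=\sum_{S_k\in\mathcal{S}}\int_{x^*_k}^{x_k}\ln\frac{t}{\gamma_k Q_k(t)}\,\mathrm{d}t, \]
which is exactly the assembly of the functions \cref{eq:Lyauto}, with each shared species counted once — the $S_k$‑integrand appearing in \cref{eq:Lyauto} for the various subnetworks containing $S_k$ all equal $\ln\!\big(t/Q_k(t)\big)$ up to an additive constant, by the same proportionality.

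I would then check that $f$ is positive definite relative to $x^*$ on $\mathscr{S}^+(x^*)$: $\nabla f(x^*)=0$ by the choice of $\gamma_k$, and $\nabla^2 f$ is diagonal with $k$‑th entry at $x^*$ a positive multiple of $\sum_m(2-m)k^{\mathrm{ref}}_{k,m}x_k^{*^{m-1}}$ — precisely the quantity that the subnetwork hypotheses \cref{con1}--\cref{con2} (equivalently \cref{con1-auto}--\cref{con2-auto}) declare positive for every subnetwork producing $S_k$ — so continuity gives local strict convexity. For dissipativeness I would write $\dot f=\sum_k u_k\dot x_k$ with $u_k:=\ln\!\big(x_k/(\gamma_kQ_k(x_k))\big)$, use $\dot x_k=P_k-C_k=\sum_l(v_{l\to k}-v_{k\to l})$ with $v_{l\to k}(x):=\sum_{\mathcal{R}_{l,k}}k_{l,\alpha_k}x_lx_k^{\alpha_k-1}=c_{lk}x_lQ_k(x_k)$, and regroup the double sum over the decomposition pairs to obtain $\dot f(x)=\sum_{\{l,k\}}(u_k-u_l)\big(v_{l\to k}(x)-v_{k\to l}(x)\big)$. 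This is where the hypothesis enters: \`\`every $\mathcal{M}^{(p)}$ reaction vector balanced'' — equivalently, by \cref{pro:autoeq}, $x^*$ reaction vector balanced in $\mathcal{M}$ — means $v_{l\to k}(x^*)=v_{k\to l}(x^*)$ on each pair, i.e. $\gamma_lc_{lk}=\gamma_kc_{kl}$; substituting this into $u_k-u_l=\ln\!\big(\gamma_lc_{lk}/(\gamma_kc_{kl})\big)+\ln\!\big(v_{k\to l}/v_{l\to k}\big)$ kills the constant term, leaving $u_k-u_l=-\ln\!\big(v_{l\to k}/v_{k\to l}\big)$, so monotonicity of $\ln$ makes each summand $\le 0$, with equality only when that pair's fluxes coincide.

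It remains to pass from $\dot f\le 0$ to asymptotic stability. If $\dot f(x)=0$ then all pairwise fluxes balance, hence $\dot x=0$, so any trajectory confined to $\{\dot f=0\}$ near $x^*$ is an equilibrium lying in the stoichiometric compatibility class of $x^*$; using the monomolecular reversible pair guaranteed by \cref{auto}(2) to tie the species together, together with local uniqueness of a reaction‑vector‑balanced equilibrium inside a compatibility class, such an equilibrium must be $x^*$, and LaSalle's invariance principle then yields local asymptotic stability. I expect this final step to be the main obstacle: in \cref{le:tw} it came for free from one‑dimensionality of the subnetwork's compatibility class, whereas for the full network the compatibility class can carry flux‑balanced cycles among species, so ruling out a nontrivial invariant set on which $\dot f$ vanishes requires genuine structural input (connectivity from \cref{auto}(2) plus uniqueness of the reaction‑vector‑balanced equilibrium). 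A secondary, more clerical point is verifying carefully that rule \cref{auto}(3) simultaneously supplies the factorization $P_k=\big(\sum_i c_{ik}x_i\big)Q_k$ for every species and the cancellation identities $\gamma_lc_{lk}=\gamma_kc_{kl}$ used above.
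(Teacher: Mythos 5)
Your construction is essentially the paper's (unwritten) argument: the published proof of \cref{thm:auto} is a single line --- combine \cref{le:auto} with \cref{pro:autoeq} --- and what that combination amounts to is precisely your gluing of the two-species Lyapunov functions \cref{eq:Lyauto} into one separable function, with condition (3) of \cref{auto} guaranteeing that the integrands attached to a shared species agree up to additive constants and with reaction vector balance supplying the identity $\gamma_l c_{lk}=\gamma_k c_{kl}$ that reduces $\dot f$ to the nonpositive per-pair terms. The obstacle you flag at the end is not actually one: since every reaction vector of an autocatalytic network has the form $e_k-e_l$, your pairwise identity gives $\dot f(x)=0$ iff $\nabla f(x)\perp\mathscr{S}$, and the local strict convexity of $f$ that you already obtained from \cref{con1}--\cref{con2} forces the restriction of $f$ to $\mathscr{S}^{+}(x^*)$ to have a unique critical point near $x^*$, namely $x^*$ itself, so $\dot f<0$ on a punctured neighborhood of $x^*$ within the compatibility class and no separate uniqueness or LaSalle input is required.
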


\begin{proof}
Combining \cref{le:auto} and \cref{pro:autoeq}, the result comes immediately.
\end{proof}

We give two examples to exhibit the validity of \cref{thm:auto} and \cref{pro:autoeq}: one is a constructed network, the other is a reduced metabolic autocatalytic network \cite{barenholz2017design}.

\begin{example}
The following autocatalytic network is $4$-dimensional and with deficiency $5$,
\begin{align}
&\xymatrix{S_{2} \ar @{ -^{>}}^{1}  @< 1pt> [r]
& S_{1} \ar  @{ -^{>}}^{2}  @< 1pt> [l]
\ar @{ -^{>}}^{2}  @< 1pt> [r]
&S_3 \ar  @{ -^{>}}^{1}  @< 1pt> [l]
\ar @{ -^{>}}^{1}  @< 1pt> [r]
&S_4 \ar  @{ -^{>}}^{2}  @< 1pt> [l]},
\notag
\\
&\xymatrix{S_1 +2S_2\ar[r]^-{1}& 3S_2},~~
\xymatrix{3S_1 +S_2\ar[r]^-{2}& 4S_1},\\ \notag
&\xymatrix{
	S_1 +2S_3\ar[r]^-{1}& 3S_3},~~
\xymatrix{3S_1 +S_3\ar[r]^-{2}& 4S_1},\\\notag
&\xymatrix{
	S_4 +2S_3\ar[r]^-{1}& 3S_3},~~
\xymatrix{3S_4 +S_3\ar[r]^-{2}& 4S_4}.\\\notag
\end{align}
Assume it obeys three conservation laws $x_1+x_2=\frac{\sqrt{3}+1}{2}$, $x_2+x_3=\frac{\sqrt{3}+1}{2}$ and $x_3+x_4=\frac{\sqrt{3}+1}{2}$, then it is easy to get that $x^*=(\frac{\sqrt{3}-1}{2},1,1,\frac{\sqrt{3}-1}{2})^\top$ is a reaction vector balanced equilibrium.

We decompose the network into three two-species subsystems $\mathcal{M}^{(p)}|^3_{p=1}$ with $\mathcal{S}^{(1)}=\{S_1,S_2\}$, $\mathcal{S}^{(2)}=\{S_1, S_3\}$,  $\mathcal{S}^{(3)}=\{S_3, S_4\}$. It is also easy to verify that $(x^*_1,x^*_2)^\top=(\frac{\sqrt{3}-1}{2},1)^\top$, $(x^*_1,x^*_3)^\top=(\frac{\sqrt{3}-1}{2},1)^\top$ and $(x^*_3,x^*_4)^\top=(1,\frac{\sqrt{3}-1}{2})^\top$ are reaction vector balanced in $\mathcal{M}^{(1)}$, $\mathcal{M}^{(2)}$ and $\mathcal{M}^{(3)}$, respectively. These results reveal that the MAS share the same equilibrium with that of the subsystems, which is consistent with $\cref{pro:autoeq}$. Finally we can verify that every subsystem $\mathcal{M}^{(p)}$ supports Eqs. \cref{con1} and \cref{con2}, so based on \cref{thm:auto} the MAS is locally asymptotically stable at $(\frac{\sqrt{3}-1}{2},1,1,\frac{\sqrt{3}-1}{2})^\top$ .
\end{example}

\begin{example}
	Consider an $n$-species autcatalytic MAS with $\emph{dim}\mathcal{S}=n-1$ and $\delta=n$,
	\begin{align}
	&\xymatrix{S_{i} \ar @{ -^{>}}^{k_{i,1}}  @< 1pt> [r]& S_{i+1} \ar  @{ -^{>}}^{k_{i+1,1}}  @< 1pt> [l]},\\\notag
	&\xymatrix{
		S_i +S_{i+1}\ar[r]^-{k_{i,2}}& 2S_{i+1},} \\ \notag
	&\xymatrix{i=1,\cdots, n, ~S_{n+1}=S_1.}
	\end{align}
The lower part in this network can be viewed as an autocatalytic cycle model \cite{barenholz2017design} without inflow and outflow, which usually appears in metabolic networks.

By selecting $k_{i,1}=k_{i,2}=1, k_{i+1,1}=2$, $i=1,\cdots,n$, and supposing this network to follow the mass conservation laws $x_i+x_{i+1}=2, i=1,\cdots,n-1$, we get $x^*=(1,\cdots,1)^\top$ is a reaction vector balanced equilibrium in this MAS.
	Clearly, this network can be decomposed into $n$ two-species autocatalytic CRNs according to reaction vectors, i.e., $\mathcal{S}^{(1)}=(S_1, S_2),$ $\mathcal{S}^{(2)}=(S_2, S_3),$ $\cdots,$ $\mathcal{S}^{(n)}=(S_{n}, S_1)$, respectively. Further, it is not hard to compute that every subnetwork successively admits a reaction vector balanced equilibrium
	$(1,1)^\top$, which is consistent with \cref{pro:autoeq}. Finally, according to \cref{re:stability} and \cref{thm:auto}, the equilibrium $x^*$ is locally asymptotically stable.
\end{example}

%\begin{corollary}\emph{(The composition of autocatalytic CRNs)}
%For a $\mathcal{M}$ that is compounded of a series of two-species autocatalytic $\mathcal{M}^{(p)}$s, $p=1,\cdots,\ell$, suppose $\mathcal{M}$ possesses a reaction vector balanced equilibrium $x^*$, then $x^*$ is locally asymtotically stable if every $\mathcal{M}^{(p)}$ satisfies Eq. \cref{con1} and Eq. \cref{con2}, and the compounded $\mathcal{M}$ meets the condition $4$ in \cref{auto}.
%\end{corollary}

\section{Conclusion}\label{sec6}
From the above discussion, the conclusions can be reached that it is possible to capture the stability properties of some large-scale CRNs through the proposed network decomposition technique. Some sufficient conditions are presented to suggest stability if the network can be decomposed into a complex balanced subnetwork and a few $1$-dimensional subnetworks (or a few two-species subnetworks) whether the species are shared among subnetworks or not. The results are finally applied to autocatalytic networks for validation.

\section*{Appendix}

\setcounter{equation}{0}
\setcounter{theorem}{0}
\setcounter{subsection}{0}
\setcounter{example}{0}
\gdef\theequation{A.\arabic{equation}}
\gdef\thetheorem{A.\arabic{theorem}}
\gdef\thesubsection{A.\arabic{subsection}}
\gdef\theexample{A.\arabic{example}}
\section*{Appendix 1: CRNs and related stability resutlts}\label{ap1}
In this section, some basic concepts associated with CRNs are reviewed. For those special CRNs with stability property, the available Lyapunov functions are recalled.
\subsection{CRNs}\label{ap2}
Consider a network consisting of $n$ species $S_1, \cdots, S_n$ and $r$ chemical reactions. We denote the $i$th $(i=1,\cdots,r)$ reaction as
\begin{equation}\label{reaction}
\sum^{n}_{j=1}v_{ji}S_j \rightarrow \sum^{n}_{j=1}v'_{ji}S_j,
\end{equation}
where $v_{ji}, ~v'_{ji}\in\mathbb{Z}_{\geq 0}$ are the complexes of reactant and product, respectively.

We give some definitions related to CRNs \cite{Feinberg1995The} that will be used throughout the paper.

\begin{definition} \emph{(CRN)}. 
A CRN is composed of three finite sets:
	\begin{enumerate}[itemindent=3em]
		\item{a set of species $\mathcal{S}=\{S_1, \cdots, S_n\}$;}
		\item{a set of complexes $\mathcal{C}=\bigcup^r_{i=1}\{v_{\cdot i},v'_{\cdot i}\}$  and the $j$th entry of $v_{\cdot i}$ is the stoichiometric coefficient of $S_j$ in this complex, based on which \cref{reaction} is simply written as $v_{.i}\to v'_{.i}$;}
		\item{a set of reactions $\mathcal{R}=\{v_{\cdot 1}\rightarrow v'_{\cdot 1},\cdots,v_{\cdot r}\rightarrow v'_{\cdot r}\}$, satisfying $\forall~ v_{\cdot i} \in \mathcal{C}, v_{\cdot i}\rightarrow v_{\cdot i}\notin \mathcal{R}$ but $\exists~ v'_{\cdot i}$, s.t. $v_{\cdot i}\rightarrow v'_{\cdot i} \in\mathcal{R}$ or $v'_{\cdot i}\rightarrow v_{\cdot i}\in\mathcal{R}$.}
	\end{enumerate}
The triple $(\mathcal{S,C,R})$ or $\mathcal{N}\triangleq(\mathcal{S,C,R})$ is usually used to indicate a CRN.
\end{definition}

\begin{definition}\emph{(weakly reversible and reversible CRN)}
A CRN $(\mathcal{S,C,R})$ is weakly reversible if for each reaction $v_{\cdot i}\rightarrow v'_{\cdot i} \in\mathcal{R}$, there exists a chain of reactions, which starts from $v'_{\cdot i}$ and ends with $v_{\cdot i}$, i.e., $v'_{\cdot i}\rightarrow v_{\cdot i_1} \in\mathcal{R}$, $v_{\cdot i_1}\rightarrow v_{\cdot i_2} \in\mathcal{R}, \cdots, v_{\cdot i_m}\rightarrow v_{\cdot i} \in\mathcal{R}$. In particular, if $\forall v_{\cdot i}\rightarrow v'_{\cdot i} \in\mathcal{R}$, there is $v'_{\cdot i}\rightarrow v_{\cdot i} \in\mathcal{R}$, we say that this network is reversible.
\end{definition}

\begin{definition}\emph{(stoichiometric subspace)}.
	For a CRN $(\mathcal{S,C,R})$, the linear subspace $\mathscr{S}\triangleq \emph{span}\{v'_{\cdot 1}-v_{\cdot 1},\cdots,v'_{\cdot r}-v_{\cdot r}\}$ is called the stoichiometric subspace of the network, and its dimension \emph{dim}$\mathscr{S}$ is called the dimension of the network.
\end{definition}

\begin{definition}\emph{(stoichiometric compatibility class).}
Given a CRN $(\mathcal{S,C,R})$,
for $x_0\in\mathbb{R}_{\geq0}^{n}$, we say the sets $\mathscr{S}(x_0)\triangleq\{x_0+\xi\mid\xi\in\mathscr{S}\}$, $\bar{\mathscr{S}}^+(x_0)\triangleq\mathscr{S}(x_0)\bigcap \mathbbold{R}^n_{\geq 0}$ and $\mathscr{S}^+(x_0) \triangleq\mathscr{S}(x_0)\bigcap \mathbbold{R}^n_{>0}$ are the stoichiometric compatibility class, nonnegative and positive stoichiometric compatibility class of $x_0$, respectively.
\end{definition}

\begin{definition}\emph{(deficiency)}
For a CRN $(\mathcal{S,C,R})$, $$\delta=|\mathcal{C}|-l-\emph{dim}\mathscr{S}$$ is defined as the deficiency of the network with
$|\mathcal{C}|$ and $l$ represent the numbers of complexes and of linkage classes in the network, respectively.
\end{definition}

When a CRN obeys the mass action law, the reaction rate may be measured according to the power law with respect to the species concentrations, e.g., for the $i$th reaction $v_{\cdot i}\rightarrow v'_{\cdot i}$ the reaction rate is
$$\Xi_i(x)= k_{i} x^{v_{\cdot i}}\triangleq k_i\prod_{j=1}^{d}x_{j}^{v_{ji}},$$
where $k_i \in \mathbb{R}_{>0}$ is the reaction rate constant, and $x\in \mathbb{R}^n_{\geq 0}$ represents the concentration vector of the species $\mathcal{S}_i$ with each element $x_i$.

\begin{definition}\emph{(MAS).}
	A CRN $(\mathcal{S,C,R})$ with mass-action kinetics is called an MAS, represented by the quadruple $\mathcal{M}\triangleq(\mathcal{S,C,R,K})$, where $\mathcal{K}=\{k_1,\cdots,k_r\}$ is the set of reaction rate constants.
\end{definition}

In the context, we are mainly concerned with MASs. Denote the stoichiometric matrix of $\mathcal{N}=(\mathcal{S,C,R})$ by $\Gamma\in\mathbb{Z}_{n\times r}$ with the $i$th column $\Gamma _{\cdot i}=v'_{\cdot i}-v_{\cdot i}$, termed reaction vector, and the $r$-dimensional non-negative vector function of reaction rate by $\Xi(x)$ with each component $\Xi_{i}(x)$.
Then the dynamics of $\mathcal{M}=(\mathcal{S,C,R,K})$ follows
\begin{equation}\label{eq:mas}
\frac{\mathrm{d}x}{\mathrm{d}t}=\Gamma \Xi(x),~~~x\in \mathbb{R}_{\geq0}^{n}.
\end{equation}

\begin{definition}\emph{(equilibrium)}
For an $\mathcal{M}=(\mathcal{S,C,R,K})$, a positive point $x^{*}\in\mathbb{R}_{>0}^{n}$ is an equilibrium in $\mathcal{M}$ if it satisfies $\Gamma \Xi(x^{*})=0$. An MAS that admits an equilibrium is a balanced MAS.
\end{definition}

Following the notion of balanced CRN, we introduce a new concept, termed as generalized balancing, which first emerged in \cite{hoessly2019stationary} in a stochastic version. In this paper, we extended this notion to the deterministic case.
\begin{definition}\emph{(generalized balancing)}
\label{def:generalized balanced}
	We say an $\mathcal{M}=(\mathcal{S,C,R,K})$ is generalized balanced at $x^{*}\in\mathbb{R}_{>0}^{n}$ if there exists a set of tuples of subsets of reaction set $\mathcal{R}$, denoted as $\{(L_i,R_i)_{i\in A}\} $ with
	$$\bigcup_{i\in A} L_i=\bigcup_{i\in A} R_i=\mathcal{R}$$
	such that $\forall i \in A$
	\begin{align}
	\sum_{v_{\cdot i}\rightarrow v_{\cdot i}^{'} \in L_i}k_{i}(x^{*})^{v_{\cdot i}}=
	\sum_{v_{\cdot i}\rightarrow v_{\cdot i}^{'} \in R_i}k_{i}(x^{*})^{v_{\cdot i}}
	\end{align}
holds true.	
\end{definition}

\begin{remark}
\cref{def:generalized balanced} declares that generalized balancing encompasses the following cases,
	\begin{itemize}[itemindent=3em]
			\item detailed balancing with the set $A$ induced by every reaction, i.e., $A=\{i~|~(v_{\cdot i}\longrightarrow v'_{\cdot i}, v'_{\cdot i}\longrightarrow v_{\cdot i})_{v_{\cdot i}\longrightarrow v'_{\cdot i} \in \mathcal{R}}\}$,
		\begin{align*}
		L_i=\left\lbrace v_{\cdot i}\rightarrow v'_{\cdot i}\in \mathcal{R}\right\rbrace,~R_i=\left\lbrace v'_{\cdot i}\rightarrow v_{\cdot i}\in \mathcal{R}\right\rbrace,
		\end{align*}
		\item complex balancing with the set $A$ induced by any $z \in \mathcal{C}$, i.e., $A=\{z~|~z\in\mathcal{C}\}$,
		\begin{align*}
		L_z=\left\lbrace v_{\cdot i}\rightarrow v'_{\cdot i} \in \mathcal{R}\mid v_{\cdot i}=z \right\rbrace,\\
		R_z=\left\lbrace v_{\cdot i}\rightarrow v'_{\cdot i} \in \mathcal{R}\mid v'_{\cdot i}=z \right\rbrace,
		\end{align*}
		\item  reaction vector balancing \cite{Cappelletti2018Graphically} with the set $A$ induced by every given reaction vector $ \eta \in \mathbbold{Z}^n$, i.e., $A=\{\eta~|~\eta:=v'_{\cdot i}-v_{\cdot i}, \forall i\}$,
		\begin{align*}
		&L_{\eta}=\left\lbrace v_{\cdot i}\rightarrow v'_{\cdot i} \in \mathcal{R}\mid v'_{\cdot i}-v_{\cdot i}=\eta \right\rbrace,\\
		&R_{\eta}=\left\lbrace v_{\cdot i}\rightarrow v'_{\cdot i} \in \mathcal{R}\mid v'_{\cdot i}-v_{\cdot i}=-\eta \right\rbrace.
		\end{align*}
		\item any combinatorial forms of the above three types of balancing and other possibilities.
 	\end{itemize}
\end{remark}

Note that a generalized balanced MAS must be a balanced MAS. The former two classes of balancing are often encountered in the literature with detailed balancing request the structure of the network to be reversible while complex balanced network to be weakly reversible. It is obvious that a detailed balanced network must be a complex balanced one, and not vice versa. Let us consider the following example to illustrate the concept of reaction vector balancing.

\begin{example}\label{eg:Aurora}
Consider the network describing the autophosphorylation of Aurora B kinase, given by
\begin{align}
	&\xymatrix{E\ar @{ -^{>}}^{k_1}  @< 1pt> [r]
		&EP \ar  @{ -^{>}}^{k_2}  @< 1pt> [l]},
	&\xymatrix{E+EP \ar[r]^-{k_3}	& 2EP}.
\end{align}
where both trans- and cis-autophosphorylation reactions happens, and $E, EP$ are proteins. There are $4$ complexes, $2$ linkage classes, and \emph{dim}$\mathcal{S}=1$, which implies that the deficiency $\delta=4-2-1=1$. Let $S_1=E, S_2=EP$, there are two reaction vectors $(-1,1)^\top$, $(1,-1)^\top$ in the network. If this system has a positive reaction vector balanced equilibrium $(x^*_1,x^*_2)$, then it should fulfill the following equality $k_1x^*_1+k_3x^*_1x^*_2=k_2x^*_2$, i.e., $(x^*_1,x^*_2)=(c, \frac{ck_1}{k_2-ck_3})$ with $c<\frac{k_2}{k_3}$ a positive constant.
\end{example}

\subsection{Some known Lyapunov functions}
As stated in the Introduction, for some special CRNs, such as those with weakly reversible structure and those with dimension $1$, their stability (under some moderate conditions) has been well studied and the corresponding Lyapunov functions are also proposed. For the former under complex balancing condition, the well-known pseudo-Helmholtz free energy function \cite{HornJackson1972General} is an available one, which takes
\begin{align}\label{eq:Helmholtz}
		G(x)=\sum^{n}_{j=1}\left(x^*_j-x_j-x_j\ln{\frac{x^*_j}{x_j}}\right),~~ x\in\mathbb{R}^n_{>0}.
	\end{align}
For any $1$-dimensional $\mathcal{M}=(\mathcal{S,C,R,K})$, an available Lyapunov function \cite{Fang2015Lyapunov} is given by
	\begin{align}\label{sub1Lya}
		f(x)=\int^{\gamma(x)}_0 \ln \tilde{u}(y^{\dag}(x)+\alpha \omega)\dd \alpha, ~~ x\in\mathbb{R}^n_{>0}
	\end{align}	
with $\tilde{u}$ satisfy $h(x,\tilde{u})=0$ and $h(x,u)$ defined by
   \begin{align}\label{eq:sub1Lya}
	h(x,u)=\sum_{\{i|\beta_{i} >0\}}({k}_i {x}^{{v}_{\cdot i}})\bigg(\sum^{\beta_{i}-1}_{j=0} u^j \bigg)
	+\sum_{\{i|\beta_{i} <0\}}({k}_i  {x}^{{v}_{\cdot i}})\bigg(-\sum^{-1}_{j=\beta_{i}} {u}^j \bigg),
   \end{align}
where $\omega \in \mathbb{R}^{n}\setminus \{\mathbbold{0}_{n}\}$ is a set of bases of ${\mathscr{S}}$, and $\beta_{i}\in \mathbb{Z}\setminus\{0\}$ satisfies ${v}'_{\cdot i}-{v}_{\cdot i}=\beta_{i} \omega, ~i=1,\cdots,r$, and moreover,
 $\gamma \in\mathscr{C}^2(\mathbb{R}^n_{>0};\mathbb{R}_{>0})$, $y^{\dag}\in\mathscr{C}^2(\mathbb{R}^n_{>0};\mathbb{R}^n_{>0})$ are constrained by $x=y^{\dag}(x)+\gamma(x)\omega$ and
$\gamma(x+b \omega)=\gamma(x)+b$ $,~ \forall b\in \mathbb{R}$, respectively.

\section*{Appendix 2: Detailed proofs}
In the appendix, we provide detailed proofs for some results appearing in \cref{sec3}, \cref{sec4} and \cref{sec5}.
\subsection*{B. The proof of \cref{thm:com-1}}\label{Thm3.3proof}
\setcounter{equation}{0}
\gdef\theequation{B.\arabic{equation}}

Denote the state of $\mathcal{M}^{(p)}$ by $x^{(p)}\in \mathbb{R}^{n_p}_{>0}$. Since $\forall p\neq q\in\{0,1,...,\ell\}$,~$E_p:= \mathcal{S}^{(0)}\bigcap\mathcal{S}^{(p)}\neq\emptyset$, from \cref{decomposition}, we can get the state of $\mathcal{M}$: $x=\bigg(\bigotimes_{S_i\in \mathcal{S}^{(0)}}x^{(0)}_i,\bigotimes^{\ell}_{p=1}\bigotimes_{S_j\in \mathcal{S}^{(p)}\backslash E_p}x^{(p)}_j\bigg)^\top$ where $ x_i,x_j$ represent the species $S_i,S_j$.
Meanwhile, according to the decomposition, the dynamics of the $\mathcal{M}$ can be represented as
	\begin{align}\label{eq:dynamicCB-sub1}
		\dot{x}=\sum^{\ell}_{p=0} \sum^{r_{p}}_{i=1}k^{(p)}_i x^{v^{(p)}_{\cdot i}}
		\bigg(v'^{(p)}_{\cdot i}-v^{(p)}_{\cdot i}\bigg),%\tag{A.1}
	\end{align}
where $r_p$ is the number of reactions in $\mathcal{M}^{(p)}$.

	Since every $\mathcal{M}^{(p)}$ for $p=1,\cdots,\ell$ is reaction vector balanced, plus condition $(2)$ listed in \cref{thm:com-1}, without loss of generality,
	the reactions in $\mathcal{M}^{(p)}$ are supposed to be classified into two groups, which respectively are denoted by $L_{\omega_p}=\{l:v'^{(p)}_{\cdot l}- v^{(p)}_{\cdot l}=(\bigotimes_{S_i\in E_p}{1}, \omega^\top_p)^\top\}$ and $R_{\omega_p}=\{l:v'^{(p)}_{\cdot l}- v^{(p)}_{\cdot l}=-(\bigotimes_{S_i\in E_p}{1}, \omega^\top_p)^\top\}$. Since $\mathcal{M}^{(p)}$ is $1$-dimensional, we can construct a Lyapunov candidate for $\mathcal{M}^{(p)}$ which is a special form of \cref{sub1Lya},
	\begin{align*}\label{eq:Lya2}
		f^{(p)}(x^{(p)})&=\sum_{S_i\in E_p}
		\bigg(x_i^*{^{(p)}}-x^{(p)}_i-x_i\ln\frac{x_i^*{^{(p)}}}{x^{(p)}_i}\bigg)+
		\int^{\gamma{(\tilde{x}^{(p)})}}_{0}\ln\tilde{u}_p(y^{\dag}(\tilde{x}^{(p)})+\omega_p t) dt,
	\end{align*}
	where $\tilde{x}^{(p)}=(\bigotimes_{S_j\in \mathcal{S}^{(p)}\backslash E_p} x^{(p)}_j)^\top$, represents the components that are not involved in the complex balanced $\mathcal{M}^{(0)}$,
	$\gamma \in\mathscr{C}^2(\mathbb{R}^{n_p-|E_p|}_{>0};\mathbb{R}_{>0})$ , $y^{\dag}\in\mathscr{C}^2(\mathbb{R}^{n_p-|E_p|}_{>0};\mathbb{R}^{n_p-|E_p|}_{>0})$ and $\omega_p$ share the same meanings with \cref{sub1Lya}.
	
	Combing the Lyapunov function \cref{eq:Helmholtz} for complex balanced $\mathcal{M}^{(0)}$, which is represented as
    \begin{align}%\label{eq:Lya3}
    	f^{(0)}(x^{(0)})&=\sum_{S_i\in \mathcal{S}^{(0)}}
    		\bigg(x_i^*{^{(0)}}-x^{(0)}_i-x^{(0)}_i\ln\frac{x_i^*{^{(0)}}}{x^{(0)}_i}\bigg), %\tag{A.2}
    \end{align} it
	is easy to derive the following function
	\begin{align}%\label{eq:Lya4}
		f(x)=
		f^{(0)}(x^{(0)})+\sum^{\ell}_{p=1}\int^{\gamma{(\tilde{x}^{(p)})}}_{0}\ln\tilde{u}_p(y^{\dag}(\tilde{x}^{(p)})+\omega_p t) dt. %\tag{A.3}
	\end{align}

	The next step is to verify that $f(x)$ is able to be a Lyapunov function for $\mathcal{M}$. Because of the continuity of $\tilde{u}_p(\tilde{x}^{(p)})$ and \cref{thm:con-com-1}, we know that, for $p=1,\cdots,\ell$, there must exist a neighborhood of $\tilde{x}^{*^{(p)}}$, denoted by $\mathcal{N}(\tilde{x}^{*^{(p)}})$, s.t. $\forall \tilde{x}^{(p)} \in \mathcal{N}(\tilde{x}^{*^{(p)}})$, it holds
	\begin{align*}
		\omega^\top_p \nabla \tilde{u}_p(\tilde{x}^{(p)})>0.
	\end{align*}
	Since $\tilde{u}_p(\tilde{x}^{(p)})=\exp\{\omega^\top_p
		\nabla f^{(p)}(\tilde{x}^{(p)})\}$, we have
	$$\nabla \tilde{u}_p(\tilde{x}^{(p)})=\tilde{u}_p\nabla^2 f^{(p)}(\tilde{x}^{(p)})\omega_p. $$
	From \cref{pro:eq1}, $x^*$ is a generalized equilibrium point in $\mathcal{M}$,
	then $\forall x \in \mathcal{D}(x^*)$ with $$\mathcal{D}(x^*)\triangleq\mathscr{S}^{+}(x^*)\bigcap \big(\mathbb{R}^{n_0}_{>0}\bigotimes^{\ell}_{p=1}\mathcal{N}(\tilde{x}^{*^{(p)}})\big),$$ and $\forall \mu \in\mathscr{S}$, there is
	\begin{align*}
		&~\quad
		\mu^\top\nabla^2f(x)\mu
		\notag\\
		&=
		\underbrace{\mu^{(0)^\top}
			\text{diag}
			(\bigotimes_{S_i\in\mathcal{S}^{(0)}}1/{x^{*^{(0)}}_i})
			\mu^{(0)}}_{g_1\geq 0}
		+\sum^{\ell}_{p=1} \mu^{(p)^\top} \nabla^2 f^{(p)}(\tilde{x}^{(p)})\mu^{(p)}
		\notag\\
		&={g_1}
		+\sum^{\ell}_{p=1}
		\frac{\mu^{(p)^\top}\mu^{(p)}}{\omega^\top_p\omega_p}
		\omega^\top_p \nabla^2 f^{(p)}(\tilde{x}^{(p)})\omega_p
		\notag\\
		&={g_1}
		+\sum^{\ell}_{p=1}
		\frac{\mu^{(p)^\top}\mu^{(p)}}{\omega^\top_p\omega_p}
		\frac{\omega^\top_p \nabla \tilde{u}_{p}(\tilde{x}^{(p)})}
		{\tilde{u}_{p}}
		\notag\\
		&\geq0,%\tag{A.4}
	\end{align*}
	with the equality holding if and only if $\mu=\mathbbold{0}_n$.
	Thus $f(x)$ is strictly convex in this region. Because of the fact $\nabla f(x^*)=\bigg(\big(\text{Ln}\frac{x^{(0)}}{{x^*}^{(0)}}\big)^\top,\bigotimes^{\ell}_{p=1}\frac{\omega^\top_p }{\omega^\top_p\omega_p}\ln{\tilde{u}(\tilde{x}^{(p)})}\bigg)^\top|_{x=x^*}=\mathbbold{0}^\top_n$, we obtain $f(x)\geq f(x^*)=0$ for $x\in \mathcal{D}(x^*)$.
	
	Then we need to prove $\dot {f}(x)\leq0$.
	Since $\frac{\partial f(x)}{\partial x^{(p)}}=\frac{\partial f^{(p)}(x^{(p)})}{\partial x^{(p)}}$, $p=1,\cdots,\ell$, we have
	\begin{align*}
	\dot {f}(x)&=\nabla^\top f(x) \dot {x}
	\notag
	\\
	&=\underbrace{\bigg(\text{Ln}\frac{x^{(0)}}{{x^*}^{(0)}}\bigg)^\top \cdot \sum^{r_{0}}_{i=1}k^{(0)}_i x^{v^{(0)}_{\cdot i}}
		\bigg(v'^{(0)}_{\cdot i}-v^{(0)}_{\cdot i}\bigg)}_{\dot f^{(0)}(x^{(0)})}+
	\sum^{\ell}_{p=1}\underbrace{\sum^{r_{p}}_{i=1}k^{(p)}_i x^{v^{(p)}_{\cdot i}}\beta_i \omega^\top_p
			 \nabla f^{(p)}}_{\dot f^{(p)}(x^{(p)})},%\tag{A.5}
	\end{align*}
where the second part satisfies
\begin{align*}
	\dot f^{(p)}(x^{(p)})\leq \sum^{r_{p}}_{i=1}k^{(p)}_i x^{v^{(p)}_{\cdot i}}(\exp\{\beta_i\omega^\top_p
		\nabla f^{(p)}(x^{(p)})\}-1)=0 %\tag{A.6}
	\end{align*}
with the equality holding only if $\omega^\top_p
\nabla f^{(p)}({x}^{(p)})=0$, that is $\prod_{S_i\in E_p}\frac{x^{(p)}_i}{{x^*_i}^{(p)}}\tilde{u}(\tilde{x}^{(p)})=1$, which means ${x}^{(p)}={x}^*{^{(p)}}$. The remaining proof is similar to that of \cref{thm:disjoint} and we omit it here. $\Box$

\subsection*{C. The proof of \cref{thm:com-tw}}\label{Thm16proof}
\setcounter{equation}{0}
\gdef\theequation{C.\arabic{equation}}

As we know, the complex balanced $\mathcal{M}^{(0)}$ has a Lyapunov function in the form of
	\begin{align*}
		f^{(0)}(x^{(0)})=\sum_{S_i\in \mathcal{S}^{(0)}}
		(x^*_i-x_i-x_i\ln\frac{x^*_i}{x_i})
	\end{align*}
	to render the asymptotic stability of $x^{(0)}$.
Besides, $\forall p=1,\cdots,\ell$, there is a candidate Lyapunov function for $\mathcal{M}^{(p)}$, which is given by
	\begin{align*}
		f^{(p)}(x^{(p)})=
		(x^*_i-x_i-x_i\ln\frac{x^*_i}{x_i})
		+\int^{x_j}_{x^*_j}w^{-1}_{jp}\ln \frac{t^{b^{(p)}}}{c^{(p)}_{ij}\sum_{ L_{\omega_p}}k^{(p)}_lt^{v_{jl}^{(p)}}}dt,
	\end{align*}
	where $S_i\in E_p, S_j\in \mathcal{S}^{(p)}\backslash E_p $, and
	$c^{(p)}_{ij}=\frac{{x^*_i}^{a^{(p)}}}{\sum_{ R_{\omega_p}}k^{(p)}_lx_i^{*^{v_{il}^{(p)}}}}=\frac{{x^*_j}^{b^{(p)}}}{\sum_{ L_{\omega_p}}k^{(p)}_lx_j^{*^{v_{jl}^{(p)}}}}$.
	
	Integrating the above information, we can construct the following function
	\begin{align}\label{eq:Lya1}
		f(x)=f^{(0)}(x^{(0)})+
		\sum^{\ell}_{p=1}\int^{x_j}_{x^*_j}w^{-1}_{jp}\ln \frac{t^{b^{(p)}}}{c^{(p)}_{ij}\sum_{ L_{\omega_p}}k^{(p)}_lt^{v_{jl}^{(p)}}}dt.%\tag{B.1}
	\end{align}

	Ulteriorly, we aim to state $f(x)$ is localy positive definite. By computing the Hessian matrix of $f(x)$, we derive
	\begin{align*}
		\nabla^2 f(x)=
		\text{diag}
		\bigg(
		\bigotimes_{S_i\in\mathcal{S}^{(0)}} x^{*^{-1}}_i,
		\bigotimes_{S_j\in\mathcal{S}\backslash \mathcal{S}^{(0)}}
		\frac{w^{-1}_{jp}\sum_{L_{\omega_p}} k^{(p)}_l(b^{(p)}-v_{jl}^{(p)})x^{v_{jl{(p)}}-1}_{j}}{\sum_{L_{\omega_p}} k^{(p)}_lx^{v_{jl}^{(p)}}_{j}}
		\bigg),
	\end{align*}
	where $\bigotimes$ stands for the Cartesian product.
	Since $w^{-1}_{jp}\sum_{L_{\omega_p}} k^{(p)}_l(b^{(p)}-v_{jl}^{(p)})x^{{v_{jl}^{(p)}-1}}_{j}$ is continuous, by the third condition in \cref{thm:com-tw}, there is a neighborhood of $x^*_j$ for $S_j\in\mathcal{S}\backslash \mathcal{S}^{(0)}$, denoted by $\mathcal{N}(x^*_j)$, such that
	$\forall x_j \in \mathcal{N}(x^*_j)$, we have
		$w^{-1}_{jp}\sum_{L_{\omega_p}} k^{(p)}_l(b^{(p)}-v_{jl}^{(p)})x^{*^{v_{jl}^{(p)}-1}}_{j}>0.$
	Thus,
	$\forall x\in \big(\mathbb{R}^{n_0}_{>0} \bigotimes_{S_j\in\mathcal{S}\backslash \mathcal{S}^{(0)}}\mathcal{N}(x^*_j)\big)
	\bigcap \mathscr{S}^+(x^*)$, we get $\nabla^2 f(x)>0$. Moreover, it is easy to obtain $\nabla f(x^*)=\left(\bigotimes_{S_i\in\mathcal{S}^{(0)}} \ln \frac{x_i}{x^*_i}, 	\bigotimes_{S_j\in\mathcal{S}\backslash \mathcal{S}^{(0)}}w^{-1}_{jp}\ln \frac{{x^*_j}^{b^{(p)}}}{c^{(p)}_{ij}\sum_{ L_{\omega_p}}k^{(p)}_l{x^*_j}{^{v_{jl}^{(p)}}}}\right)^\top=\mathbbold{0}_{n}^\top$, which implies $f(x)$ is lower bounded by $f(x^*)=0$.
	Then we continue to certify $\dot{f}(x)\leq0$. Here, we use the similar form of \cref{eq:dynamicCB-sub1} to represent the dynamics of $\mathcal{M}$ although the parameters and complexes different, and derive
 \begin{align*}
 	\dot {f}(x)
 	&=\underbrace{\bigg(\bigotimes_{S_i\in\mathcal{S}^{(0)}}\ln\frac{x_i}{x^*_i}\bigg)^\top \sum^{r_{0}}_{i=1}k^{(0)}_i x^{v^{(0)}_{\cdot i}}
 		\bigg(v'^{(0)}_{\cdot i}-v^{(0)}_{\cdot i}\bigg)}_{\dot f^{(0)}(x^{(0)})}+
 	\sum^{\ell}_{p=1}\underbrace{\sum^{r_{p}}_{i=1}k^{(p)}_i x^{v^{(p)}_{\cdot i}}
 		\bigg(v'^{(p)}_{\cdot i}-v^{(p)}_{\cdot i}\bigg)^\top\nabla f^{(p)}}_{\dot f^{(p)}(x^{(p)})}.%\tag{B.4}
 \end{align*}
It is obvious that $\dot f^{(0)}(x^{(0)})\leq0$ with the equality holding only at $x_i=x^*_i$ for $S_i\in\mathcal{S}^{(0)}$, since $\mathcal{M}^{(0)}$ is complex balanced. Further, since every $\mathcal{M}^{(p)}$, $p=1,\cdots,\ell$, is reaction vector balanced, according to \cref{le:tw}, there is $\dot f^{(p)}(x^{(p)})\leq0$ where the equality holds only if $x^{(p)}={x^*}^{(p)}$, which directly follows \cref{eq:dissipative}. Consequently it states that $\dot f(x)\leq 0$ with $\dot f(x)=0$ if and only if $x=x^*$. 
$\Box$

\subsection*{D. The proof of \cref{pro:autoeq}}\label{pro2proof}
\setcounter{equation}{0}
\gdef\theequation{D.\arabic{equation}}
	In terms of the reactions pair $(\mathcal{R}_{i,j},\mathcal{R}_{j,i})$, the dynamics of the considered MAS can be represented by
	\begin{align}\label{autodynamic}
		\dot{x}_i=\sum_{S_j\in\mathcal{S}}
		\bigg(
		\sum_{ \mathcal{R}_{j,i}}k_{j,\alpha_i}x_j x^{\alpha_i-1}_i-
		\sum_{ \mathcal{R}_{i,j}}k_{i,\alpha_j}x_i x^{\alpha_j-1}_j
		\bigg)%\tag{C.1}
	\end{align}
	where $i=1,\cdots,n$. Then we explain the sufficiency.
	Assume every $\mathcal{M}^{(p)}$ has a reaction vector balanced equilibirum $x^{*^{(p)}}=(x^*_i,x^*_j)\in\mathbb{R}^2_{>0}$, for
	$p=1,\cdots,\ell$, $i,j\in \{1,\cdots,n\}$.
	Based on condition $(3)$ in \cref{auto}, we might as well assume $i$ is fixed and $j=1,\cdots,m$, thus, there is (1)
	$c_jk_{1,\alpha_i}=k_{j,\alpha_i}, \forall j=1,\cdots,m_i$, for all reaction rates in $\mathcal{R}_{ji}$, (2) $c_jk_{i,\alpha_1}=k_{i,\alpha_j}, \forall j=1,\cdots,m_i$, for all reaction rates in $\mathcal{R}_{ij}$. So,
	Eq. \cref{autodynamic} can be transformed into
	\begin{align}
		\dot{x}_i=\sum^{m_i}_{j=1}c_j
		\bigg(
		\sum_{ \mathcal{R}_{j,i}}k_{1,\alpha_i}x_j x^{\alpha_i-1}_i-
		\sum_{ \mathcal{R}_{i,j}}k_{i,\alpha_1}x_i x^{\alpha_j-1}_j
		\bigg),%\tag{C.2}
	\end{align}
	which implies each $(\mathcal{R}_{i,j},\mathcal{R}_{j,i})$ has a reaction vector balanced equilibrium $(x^*_i, x^*_j), j=1,\cdots,m$. Namely, we can know that for $(\mathcal{R}_{i,j},\mathcal{R}_{j,i})$ with different $j$s and fixed $i$, they share the same concentration $x^*_i$ at their respective equilibria. As a result, the compounded concentration $x^*=(x^*_1,\cdots,x^*_n)$ from all $x^{*{(p)}}$ is a reaction vector balanced equilibirum of $\mathcal{M}$.
	In reverse, the necessity is obvious. $\Box$
%\begin{ack}                               % Place acknowledgements
%Partially supported by the Roman Senate.  % here.
%\end{ack}

%\bibliographystyle{unsrt}        % Include this if you use bibtex
%\bibliography{references}           % and a bib file to produce the
                                 % bibliography (preferred). The
                                 % correct style is generated by
                                 % Elsevier at the time of printing.

%\begin{thebibliography}{99}     % Otherwise use the
                                 % thebibliography environment.
                                 % Insert the full references here.
                                 % See a recent issue of Automatica
                                 % for the style.
  %\bibitem[Heritage, 1992]{Heritage:92}
    % Heritage (1992) {\it The American Heritage.
    % Dictionary of the American Language.}
     %Houghton Mifflin Company.
 % \bibitem[Able, 1956]{Abl:56}
     %B.~C.~Able (1956). Nucleic acid content of macroscope.
     %{\it Nature 2}, 7--9.
  %\bibitem[Able {\em et al.}, 1954]{AbTaRu:54}
    %B.~C. Able, R.~A. Tagg, and M.~Rush (1954).
   % Enzyme-catalyzed cellular transanimations.
     %In A.~F.~Round, editor,
     %{\it Advances in Enzymology Vol. 2} (125--247).
     %New York, Academic Press.
  %\bibitem[R.~Keohane, 1958]{Keo:58}
     %R.~Keohane (1958).
    % {\it Power and Interdependence:
    % World Politics in Transition.}
    % Boston, Little, Brown \& Co.
  %\bibitem[Powers, 1985]{Pow:85}
    % T.~Powers (1985).
    % Is there a way out?
     %{\it Harpers, June 1985}, 35--47.

%\end{thebibliography}

\bibliographystyle{siamplain}
\bibliography{mybib}

\end{document}